\def\version{12/02/2012 Version 6 \hfill arXiv:1105.0692}
\theoremstyle{plain}
\newtheorem{thm}{Theorem}[section]
\newtheorem{lem}[thm]{Lemma}
\newtheorem{prop}[thm]{Proposition}
\newtheorem{cor}[thm]{Corollary}
\theoremstyle{definition}
\newtheorem{rem}[thm]{Remark}
\newtheorem{defn}[thm]{Definition}
\newtheorem{examp}[thm]{Example}
\numberwithin{equation}{section}
\def\ie{\emph{i.e.}}
\def\:{\colon}
\def\.{\cdot}
\def\<{\left\langle}
\def\>{\right\rangle}
\def\({\left(}
\def\){\right)}
\def\ph#1{\phantom{#1}}
\def\epsilon{\varepsilon}
\def\leq{\leqslant}
\def\geq{\geqslant}
\def\lra{\longrightarrow}
\def\Lra{\Longrightarrow}
\def\ra{\rightarrow}
\def\tilde#1{\widetilde{#1}}
\def\iso{\cong}
\def\phi{\varphi}
\DeclareMathOperator{\rank}{rank}
\def\C{\mathbb{C}}
\def\F{\mathbb{F}}
\def\H{\mathbb{H}}
\def\k{\Bbbk}
\def\Q{\mathbb{Q}}
\def\N{\mathbb{N}}
\def\R{\mathbb{R}}
\def\Z{\mathbb{Z}}
\DeclareMathOperator{\Cotor}{Cotor}
\DeclareMathOperator{\Tor}{Tor}
\DeclareMathOperator{\Id}{Id}
\def\CP{\C P}
\def\HP{\H P}
\DeclareMathOperator{\Maps}{Maps}
\DeclareMathOperator{\ev}{ev}
\def\Sq{\mathrm{Sq}}
\def\P{\mathcal{P}}
\def\SO{\mathrm{SO}}
\def\Spin{\mathrm{Spin}}
\DeclareMathOperator{\Sgn}{Sgn}
\begin{document}
\title[Cohomology of loop spaces for Thom spaces]
{On the cohomology of loop spaces for some Thom spaces}
\author{Andrew Baker}
\address{School of Mathematics \& Statistics, University
of Glasgow, Glasgow G12 8QW, Scotland.}
\email{a.baker@maths.gla.ac.uk}
\urladdr{http://www.maths.gla.ac.uk/$\sim$ajb}

\subjclass[2010]{primary 55P35; secondary 55R20, 55R25, 55T20}
\keywords{Thom space, loop space, Eilenberg-Moore spectral
sequence}
\thanks{The author thanks Nigel Ray and Birgit Richter for
much help and encouragement, Larry Smith for pointing out
the related work of Petrie, and Teimuraz Pirashvilli who
drew our attention to the classic paper of Bott and Samelson
which predated the James splitting.}

\date{\version}
\begin{abstract}
In this paper we identify conditions under which the cohomology
$H^*(\Omega M\xi;\k)$ for the loop space $\Omega M\xi$ of the
Thom space $M\xi$ of a spherical fibration $\xi\downarrow B$
can be a polynomial ring. We use the Eilenberg-Moore spectral
sequence which has a particularly simple form when the Euler
class $e(\xi)\in H^n(B;\k)$ vanishes, or equivalently when an
orientation class for the Thom space has trivial square. As
a consequence of our homological calculations we are able to
show that the suspension spectrum $\Sigma^\infty\Omega M\xi$
has a local splitting replacing the James splitting of
$\Sigma\Omega M\xi$ when $M\xi$ is a suspension.
\end{abstract}

\maketitle

\section*{Introduction}

In \cite{AB&BR:qsymm}, topological methods were used to prove the
algebraic Ditter's conjecture on quasi-symmetric functions, which
is equivalent to the assertion that $H^*(\Omega\Sigma\CP^\infty;\Z)$
is a polynomial ring (infinitely generated but of finite type). Most
of the ingredients of the proof given there are essentially formal
within algebraic topology, the exception being James's splitting of
$\Sigma\Omega\Sigma\CP^\infty$. The purpose of this paper is to
identify circumstances in which the cohomology $H^*(\Omega M\xi;\k)$
of the loop space $\Omega M\xi$ of the Thom space $M\xi$ of a spherical
fibration $\xi\downarrow B$ can be a polynomial ring. In place of
the James splitting we use the Eilenberg-Moore spectral sequence
which has a particularly simple form when the Euler class
$e(\xi)\in H^n(B;\k)$ vanishes, or equivalently when an orientation
class for the Thom space has trivial square. As a consequence of our
homological calculations we are able to show that the suspension
spectrum $\Sigma^\infty\Omega M\xi$ has a local splitting generalizing
that for $\Sigma\Omega M\xi$ when $M\xi$ is a suspension. Our results
appear to be more general and essentially formal in that only generic
properties of the Eilenberg-Moore spectral sequence are used; however,
the above stable splitting is a weaker result than the James splitting.

Although our examples are all associated with vector bundles, our methods
are valid for arbitrary spherical fibrations, and even more generally
they apply to $p$-local or $p$-complete spherical fibrations. We hope
to consider examples associated with $p$-compact groups in future work.

We were very influenced by the discussion of the cohomology of
$\Omega\Sigma X$ in Smith's article~\cite{LS:Survey}. Massey's
paper~\cite{Massey:CohSphBdles} provides a useful background to our
work. Although we do not make direct use of it, Ray's
paper~\cite{NR:LoopsCones} has ideas that might allow generalizations
to other mapping cones. Although we do not make direct use of the results
of these papers, we remark that Bott \& Samelson~\cite{RB-HS:Pontryagin}
and Petrie~\cite{TP:CohLoopSpces} gave earlier versions of the arguments
we use, however neither paper contains the full range of our results; in
particular the latter does not deal with questions about multiplicative
structure.

\section{Thom complexes of spherical fibrations}\label{sec:ThomComp}

Let $B$ be space and let $\xi\:S^{n-1}\lra S\lra B$
be a spherical fibration with associated disc bundle
$D^n\lra D\lra B$. The Thom space $M=M\xi$ is the
cofibre of the inclusion $S\lra D$, \ie, the quotient
space $D/S$. In each fibre this corresponds to the
inclusion $S^{n-1}\lra D^n$ and there is a cofibre
sequence of based spaces
\begin{equation}\label{eq:M-cofibseq}
S_+ \lra D_+ \lra M \xrightarrow{\delta} \Sigma S_+.
\end{equation}
Here we implicitly allow for generalizations to include
localized spheres as fibres and bundles with structure
monoids obtained from the invertible components of
$\Maps(S^{n-1},S^{n-1})$.

We are interested in the based loop space $\Omega M$.
There is an obvious unbased map $S\lra\Omega M$ which
sends $v\in S_b$ (the fibre above $b\in B$) to the
non-constant loop $[0,1]\lra M$ given by $t\mapsto[(2t-1)v]$,
running through $b$ parallel to~$v$ and passing through
the base point at times $t=0,1$. This extends to a based
map $\theta\:S_+\lra\Omega M$. We write $\ev\:\Sigma\Omega M\lra M$
for the evaluation map. See~\cite{NR:LoopsCones} for
a related construction

Our next result is surely standard but we don't know
an explicit reference.
\begin{lem}\label{lem:M->M}
The composition
\[
M \xrightarrow{\;\delta\;} \Sigma S_+
  \xrightarrow{\;\Sigma\theta\;} \Sigma\Omega M
  \xrightarrow{\;\ev\;} M
\]
is a homotopy equivalence.
\end{lem}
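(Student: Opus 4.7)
The plan is first to recognise the composite $\ev\circ\Sigma\theta$ as the adjoint $\hat\theta\colon\Sigma S_+\to M$ of $\theta$, given by $\hat\theta[t,v]=[(2t-1)v]$; the lemma then becomes the claim that $\hat\theta\circ\delta\colon M\to M$ is a homotopy equivalence.

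I would then reduce to a fibrewise statement. For each $b\in B$, naturality of $\theta$ and of the Puppe cofibre sequence gives the commutative diagram
\[
\begin{CD}
S^n @>\delta_b>> \Sigma S^{n-1}_{b,+} @>\hat\theta_b>> S^n \\
@Vj_bVV @VVV @VVj_bV \\
M @>\delta>> \Sigma S_+ @>\hat\theta>> M
\end{CD}
\]
where $j_b$ is the inclusion of the fibre-wise Thom space and the top row is the analogous composite for the trivial spherical fibration over a point.

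Next I would check the classical case $B=\mathrm{pt}$ directly: $\Sigma S^{n-1}_+\simeq S^n\vee S^1$; the Puppe long exact sequence of $S^{n-1}_+\to D^n_+\to S^n$ (with $D^n_+$ trivial in positive reduced cohomology) forces $\delta_b$ to be an isomorphism on $H_n$ into the $S^n$-summand, while $\hat\theta_b$ restricts to the standard homeomorphism $\Sigma S^{n-1}\simeq S^n$ on that summand and is null on the $S^1$ (since $\theta$ of the disjoint basepoint is the constant loop). So $\hat\theta_b\circ\delta_b$ has degree $\pm1$.

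To globalise I would combine this with the Thom isomorphism $\tilde H^*(M;\k)\cong H^{*-n}(B;\k)$: the fibrewise equivalence forces $(\hat\theta\circ\delta)^*$ to act by $\pm1$ on the Thom class, and compatibility with the $H^*(B)$-module structure (inherited from $D\simeq B$) promotes this to an automorphism of $\tilde H^*(M;\k)$; Whitehead then yields the equivalence. The main difficulty I anticipate is handling the non-orientable or $p$-local spherical fibrations in the paper's generality, where the Thom isomorphism requires local coefficients; an alternative is to bypass it via a Serre-spectral-sequence comparison of $M$ with itself, using the fibrewise equivalence to force an $E_2$-isomorphism.
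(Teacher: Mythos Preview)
Your argument is correct in outline, but it is far more elaborate than the paper's, and the key to the short proof is already contained in the formula you wrote down in your first paragraph.

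The paper's proof is literally: unravel the definitions; the composite is homotopic to $\pm\Id_M$. Concretely, take your formula $\hat\theta[t,v]=[(2t-1)v]$ and pair it with any explicit model of the Puppe coboundary $\delta\colon M=D/S\to\Sigma S_+$. In radial coordinates $[rv]$ on each fibre (with $r\in[0,1]$, $v\in S_b$), the standard models for $\delta$ all have the shape $[rv]\mapsto[\rho(r),v]$ for some monotone reparametrisation~$\rho$. Composing, $\hat\theta\circ\delta$ sends $[rv]$ to $[\psi(r)\,v]$ for a continuous $\psi$ with $|\psi(1)|=1$; a straight-line homotopy of~$\psi$ to $\pm\mathrm{id}$, performed uniformly over all fibres, exhibits $\hat\theta\circ\delta\simeq\pm\Id_M$. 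No (co)homology, no Thom class, no orientability assumption, and no Whitehead theorem are required.

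Your route via fibrewise degree, the Thom isomorphism, $H^*(B)$-linearity and Whitehead does reach the conclusion, but every complication you flag at the end---local coefficients for non-orientable or $p$-local $\xi$, the need for $M$ to be simply connected to invoke Whitehead, checking that the self-map is an $H^*(B)$-module map---is self-inflicted: these issues arise only because you detour through cohomology rather than using the explicit pointwise description you already have. The direct argument also yields the sharper statement the paper records, namely that the composite is $\pm\Id_M$ rather than merely some self-equivalence; this is what is actually used later (e.g.\ in Corollary~\ref{cor:M->M} and Corollary~\ref{cor:EMSS-edgehomo}).
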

\begin{proof}
This follows by unravelling definitions. Depending
on the sign conventions used for the coboundary map
of a cofibration, it is homotopic to $\pm\Id$.
\end{proof}
\begin{cor}\label{cor:M->M}
Let $h^*(-)$ be a reduced cohomology theory. Then
the cohomology suspension map
\[
h^*(M) \xrightarrow{\;\ev^*\;} h^*(\Sigma\Omega M)
       \xrightarrow{\;\iso\;} h^{*-1}(\Omega M)
\]
is a monomorphism.
\end{cor}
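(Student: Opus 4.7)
The plan is to simply apply the contravariant functor $h^*(-)$ to the composition in Lemma~\ref{lem:M->M} and exploit the fact that an injective factor is detected by an isomorphic composite.

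More precisely, Lemma~\ref{lem:M->M} tells us that $\ev\circ\Sigma\theta\circ\delta\:M\to M$ is a homotopy equivalence, so upon applying $h^*(-)$ we obtain that the composite
\[
h^*(M)\xrightarrow{\;\ev^*\;}h^*(\Sigma\Omega M)\xrightarrow{\;(\Sigma\theta)^*\;}h^*(\Sigma S_+)\xrightarrow{\;\delta^*\;}h^*(M)
\]
is an isomorphism (in particular, injective). A general categorical fact then forces the first map $\ev^*$ to be a monomorphism: if $\ev^*(x)=0$ then the whole composite kills $x$, so $x=0$.

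Finally, since $h^*(-)$ is a reduced cohomology theory, the standard suspension isomorphism identifies $h^*(\Sigma\Omega M)$ with $h^{*-1}(\Omega M)$, and composition of a monomorphism with an isomorphism remains a monomorphism. This yields the claim.

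There is no real obstacle here; the statement is essentially a formal consequence of Lemma~\ref{lem:M->M} together with the suspension axiom for $h^*(-)$, and all sign ambiguities coming from the coboundary conventions are absorbed because we only need injectivity, not the specific form of the splitting.
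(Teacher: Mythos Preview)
Your argument is correct and is precisely the intended one: the paper states this as an immediate corollary of Lemma~\ref{lem:M->M}, and later (in the proof of Corollary~\ref{cor:EMSS-edgehomo}) makes explicit exactly the same point, that $(\Sigma\theta\circ\delta)^*$ furnishes a left inverse to $\ev^*$.
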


These two results are analogues of results for a
suspension $\Sigma X$ in~\cite{LS:Survey}*{section~2}
which depend on the fact that $\Sigma,\Omega$ is an
adjoint pair.

The next result is standard, although it seems to be
hard to find it stated in this form in the literature,
see for example~\cite{TP:CohLoopSpces}*{section~1}.
To clarify what is involved, we give details. First
recall an algebraic notion.

Let $\k$ be a commutative unital ring; tensor products
will be taken over $\k$ unless otherwise specified. Let
$A$ be a commutative unital graded $\k$-algebra with
product $\phi\:A\otimes A \lra A$.
\begin{defn}\label{defn:nonunitalalg}
A \emph{non-unital $A$-algebra} is a left $A$-module $M$
with multiplication
\[
A\otimes M \lra M; \quad a\otimes m \mapsto a\.m
\]
and a non-unital associative product $\mu\:M\otimes_A M\lra M$.
Thus the following diagram commutes, where
$\mathrm{T}\:M\otimes A\lra A\otimes M$ is the switch map
with appropriate signs based on gradings.
\[
\xymatrix{
A\otimes M\otimes A \otimes M \ar[rr]^{I\otimes\mathrm{T}\otimes I}\ar[d]_{\.\otimes\.}
 & & A\otimes A \otimes M\otimes M\ar[d]^{\phi\otimes\mu} \\
M\otimes M\ar[dr]_{\mu} & & A\otimes M\ar[dl]^{\.} \\
& M &
}
\]
For homogeneous elements $a_1,a_2\in A$, $m_1,m_2\in M$
and $m_1m_2=\mu(m_1\otimes m_2)$,
\[
(a_1a_2)\.(m_1m_2) =
   (-1)^{|a_2|\,|m_1|} \mu((a_1\.m_1)\otimes(a_2m_2)).
\]
\end{defn}

There is a Thom diagonal map $\tilde\Delta\:M\lra B_+\wedge M$
fitting into a strictly commutative diagram
\begin{equation}\label{eq:ThomDiagonal}
\xymatrix{
D_+\ar[rr]^(.45){\Delta}\ar[d]_{\mathrm{quot.}}
                  && D_+\wedge D_+\ar[d]^{\mathrm{quot.}} \\
M \ar[rr]^(.45){\tilde\Delta} && B_+\wedge M
}
\end{equation}
whose vertical maps are the evident quotient maps. If
$h^*(-)$ is a multiplicative cohomology theory, then
$\tilde\Delta$ induces an external product
\[
\cdot\;\:h^*(B)\otimes\tilde{h}^*(M)
\xrightarrow{\;\ph{\tilde\Delta^*}\;} \tilde{h}^*(B_+\wedge M)
\xrightarrow{\;\;\tilde\Delta^*\;} \tilde{h}^*(M);
\quad
b\otimes m\mapsto b\cdot m,
\]
where $\tilde{h}^*(-)$ denotes the reduced theory.
\begin{thm}\label{thm:Thomspce-Mult}
Suppose that\/ $h^*(-)$ is a commutative multiplicative
cohomology theory. Then the external product induced
from $\tilde\Delta$ makes $\tilde{h}^*(M)$ into a left
$h^*(B)$-module enjoying the following properties. \\
\emph{(a)} If $M$ has an orientation $u\in\tilde h^n(M)$
then the associated Thom isomorphism
\[
h^*(B) \xrightarrow{\;\iso\;} \tilde h^*(M);
\quad
x\leftrightarrow x\.u
\]
makes $\tilde h^*(M)$ into a free $h^*(B)$-module of
rank~$1$. \\
\emph{(b)} The cup product on $\tilde h^*(M)$ makes
it a commutative non-unital $h^*(B)$-algebra. \\
\emph{(c)} When $h^*(-)= H^*(-;\F_p)$ for a prime~$p$,
the mod~$p$ Steenrod algebra acts compatibly so that
the Cartan formula holds for products of the form $t\.w$
with $t\in H^*(B;\F_p)$ and $w\in\tilde H^*(M;\F_p)$.
\end{thm}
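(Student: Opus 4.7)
The plan is to treat the three parts in order, after first checking that the pairing $b\otimes m \mapsto \tilde\Delta^*(b\times m)$ really does make $\tilde h^*(M)$ a left $h^*(B)$-module. Associativity and unitality of this action reduce to coassociativity and counitality of $\tilde\Delta\:M\lra B_+\wedge M$ up to homotopy, and both of these descend from the corresponding strict identities for the ordinary diagonal $\Delta\:D_+\lra D_+\wedge D_+$ and the collapse map $D_+\lra S^0$ via the quotient square~\eqref{eq:ThomDiagonal}.

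For part (a), I would invoke the classical Thom isomorphism argument. An orientation $u\in\tilde h^n(M)$ by definition restricts on each fibre $D^n/S^{n-1}=S^n$ to a generator of $\tilde h^n(S^n)$, so over a trivializing open set $U\subset B$ we have $M\xi|_U\simeq U_+\wedge S^n$, and the map $x\mapsto x\.u$ becomes the suspension isomorphism tensored with $\tilde h^*(U_+)\iso h^*(U)$. The global result follows by a Mayer--Vietoris induction over a trivializing open cover of $B$, with the inductive step given by the five-lemma applied to the Mayer--Vietoris sequences of $B$ and $M\xi$. This step is independent of (b) and (c).

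For part (b), the cup product on $\tilde h^*(M)$ comes from a reduced diagonal $\bar\Delta\:M\lra M\wedge M$ obtained from $\Delta$ by quotienting \emph{both} factors of $D_+\wedge D_+$ to $M$; the composite $D_+\lra D_+\wedge D_+\lra M\wedge M$ carries $S\subset D$ into the wedge axis $S_+\wedge D_+\cup D_+\wedge S_+$, which collapses to the basepoint of $M\wedge M$, so it descends to $\bar\Delta$. Commutativity and associativity of the induced product are inherited from those of $\Delta$, and the product is non-unital because in general $\tilde h^*(M)$ contains no class that restricts to $1$ on each fibre. The real content is the compatibility square of Definition~\ref{defn:nonunitalalg}. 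I would verify this by a diagram chase starting from the fourfold diagonal $D_+\lra D_+^{\wedge 4}$: projecting to $B_+\wedge B_+\wedge M\wedge M$ along two routes gives, on the one hand, first $\tilde\Delta\circ\tilde\Delta$ followed by $\bar\Delta$, and on the other, $\bar\Delta$ followed by $\tilde\Delta$ on each $M$-factor. The two composites agree up to a shuffle of the middle two smash factors, and on cohomology this shuffle contributes the Koszul sign $(-1)^{|a_2|\,|m_1|}$ that appears in Definition~\ref{defn:nonunitalalg}. Keeping track of these signs cleanly is the main technical obstacle; the underlying geometry is essentially formal.

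For part (c), the mod~$p$ Steenrod operations are stable natural cohomology operations, so they commute with every pullback map, including $\tilde\Delta^*$. Combining this with the external Cartan formula
\[
\Sq^k(t\times w) = \sum_{i+j=k}\Sq^i(t)\times\Sq^j(w)
\]
(and its odd-primary analogue for $\P^k$ together with the Bockstein) and then applying $\tilde\Delta^*$ yields the Cartan formula for $t\.w$ with $t\in H^*(B;\F_p)$ and $w\in\tilde H^*(M;\F_p)$, as asserted.
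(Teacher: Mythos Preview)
Your proposal is correct and follows essentially the same route as the paper: you reduce the module and non-unital algebra axioms on $\tilde h^*(M)$ to the strict coassociativity, cocommutativity and counitality of $\Delta\:D_+\to D_+\wedge D_+$ via the quotient square~\eqref{eq:ThomDiagonal}, and you derive~(c) from naturality plus the external Cartan formula, exactly as the paper does. The only real difference is that you spell out a Mayer--Vietoris argument for~(a), which the paper simply takes as the standard Thom isomorphism; your phrasing of the two routes to $B_+\wedge B_+\wedge M\wedge M$ in~(b) is slightly informal (``$\tilde\Delta\circ\tilde\Delta$'' should be read as the iterated Thom diagonal $(\Id\wedge\tilde\Delta)\circ\tilde\Delta$), but it matches the paper's diagram~\eqref{eq:Thomspce-Mult}.
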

\begin{proof}
The main point is to verify that the following diagram
commutes, where $\Delta$ always denotes an internal
based diagonal map $X\lra X\wedge X$.
\begin{equation}\label{eq:Thomspce-Mult}
\xymatrix{
&&M\ar[lld]_{\Delta}\ar[rrd]^{\tilde\Delta}&& \\
M\wedge M\ar[rd]_(.3){\ph{\Delta}\tilde\Delta\wedge\tilde\Delta}
              &&&& B_+\wedge M\ar[ld]^(.3){\Delta\wedge\Delta} \\
&B_+\wedge M\wedge B_+\wedge M \ar[rr]_{\ph{acd}\mathrm{switch}}
   &&B_+\wedge B_+\wedge M\wedge M &
}
\end{equation}
Making use of the commutative diagram~\eqref{eq:ThomDiagonal},
this follows from properties of the diagonal
$\Delta\:D_+\lra D_+\wedge D_+$ which is (strictly) coassociative,
cocommutative and counital (the counit is the projection $D_+\lra S^0$).
The diagram
\[
\xymatrix{
&&D_+\ar[lld]_{\Delta}\ar[rrd]^{\Delta}&& \\
D_+\wedge D_+\ar[rd]_(.4){\ph{\Delta}\Delta\wedge\Delta}
                  &&&& D_+\wedge D_+\ar[ld]^(.4){\Delta\wedge\Delta} \\
&D_+\wedge D_+\wedge D_+\wedge D_+\ar[rr]_{\ph{ac}\mathrm{switch}}
   &&D_+\wedge D_+\wedge D_+\wedge D_+&
}
\]
commutes, so by passing to the diagram of quotients we obtain
commutativity of~\eqref{eq:Thomspce-Mult}.

Applying $h^*(-)$ and $\tilde h^*(-)$ now give the algebraic
properties asserted. Of course $h^*(M)$ is also a commutative
unital $h^*$-algebra.

The statement about the Steenrod action follows from the
Cartan formula for external smash products and naturality.
\end{proof}
\begin{cor}\label{cor:Thomspce-Mult}
If the orientation $u$ satisfies $u^2=0$, then the product
in\/ $\tilde h^*(M)$ is trivial.
\end{cor}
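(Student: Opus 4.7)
The plan is to reduce an arbitrary product in $\tilde h^*(M)$ to one involving $u^2$ by exploiting the two pieces of structure already established in Theorem~\ref{thm:Thomspce-Mult}, namely the Thom isomorphism (part (a)) and the non-unital $h^*(B)$-algebra axiom (part (b)).

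First I would invoke the Thom isomorphism to write any two homogeneous classes $m_1,m_2\in\tilde h^*(M)$ in the form $m_1=x\.u$ and $m_2=y\.u$ with $x,y\in h^*(B)$. This is the whole point of assuming that the Thom class $u$ exists: every class is an $h^*(B)$-multiple of $u$.

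Next I would apply the compatibility relation from Definition~\ref{defn:nonunitalalg}, which, specialised to $a_1=x$, $a_2=y$, $m_1=m_2=u$, reads
\[
(xy)\.(u\.u) = (-1)^{|y|\,|u|}\,\mu\bigl((x\.u)\otimes(y\.u)\bigr).
\]
The hypothesis $u^2=0$ forces the left-hand side to vanish, so $\mu(m_1\otimes m_2)=0$. Since this holds for arbitrary homogeneous $m_1,m_2$, the product on $\tilde h^*(M)$ is identically zero.

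There is really no substantive obstacle here; the only thing to be careful about is the sign in the compatibility square of Definition~\ref{defn:nonunitalalg}, but since we are only concluding that the right-hand side is zero, the sign is irrelevant. The proof is therefore a one-line consequence of parts (a) and (b) of Theorem~\ref{thm:Thomspce-Mult} once the freeness of $\tilde h^*(M)$ over $h^*(B)$ is in hand.
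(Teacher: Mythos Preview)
Your argument is correct and is exactly the intended one: the paper states this as an immediate corollary of Theorem~\ref{thm:Thomspce-Mult} without further comment, and your use of the Thom isomorphism (a) together with the non-unital $h^*(B)$-algebra compatibility (b) to reduce an arbitrary product $(x\.u)(y\.u)$ to a multiple of $u^2$ is precisely the one-line deduction the paper has in mind.
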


Notice that the condition $u^2=0$ for one orientation implies
that the same is true for any orientation.

We end with another result involving the external diagonal.
\begin{lem}\label{lem:Diag-S/M}
The following diagram commutes.
\[
\xymatrix{
& \Sigma S_+ \ar[r]^{\;\Sigma\theta\;}\ar[dl]_{\Sigma\Delta}
    & \Sigma\Omega M\ar[r]^{\;\ev} & M\ar[d]^{\tilde\Delta}  \\
\Sigma S_+\wedge S_+\ar[d] &&& B_+\wedge M \\
\Sigma S_+\wedge B_+\ar[r]^{\iso}
       & B_+\wedge\Sigma S_+\ar[rr]^{\;\Id\wedge\Sigma\theta\;}
       & &   B_+\wedge\Sigma\Omega M\ar[u]_{\Id\wedge\ev\;}
}
\]
Hence if\/ $h^*(-)$ is a multiplicative cohomology theory,
then $(\ev\circ\Sigma\theta)^*\:\tilde h^*(M)\lra h^*(S)$
is a homomorphism of\/ $h^*(B)$-modules.
\end{lem}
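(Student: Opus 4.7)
The plan is to verify commutativity of the diagram by tracing a typical element through both compositions, and then to deduce the module-homomorphism statement by pulling back the external product that defines the $h^*(B)$-action on $\tilde h^*(M)$.

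For commutativity, I would take a generic element $t\wedge v_+\in\Sigma S_+$ with $v$ in the fibre $S_b$. Along the top composition, $\ev\circ\Sigma\theta$ produces $[(2t-1)v]\in M$, and then $\tilde\Delta$ sends this to $b_+\wedge[(2t-1)v]$, because by construction $\tilde\Delta$ records the image in $B$ of any point of $D$. Along the bottom composition, the based diagonal on $S_+$ duplicates $v_+$, the projection $S_+\to B_+$ converts one copy to $b_+$, the switch rearranges factors, and then $\ev\circ\Sigma\theta$ applied to the surviving $\Sigma S_+$-coordinate recovers $[(2t-1)v]$, again yielding $b_+\wedge[(2t-1)v]$. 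More structurally, this is really a consequence of the strictly commutative square~\eqref{eq:ThomDiagonal}: the Thom diagonal pulls back to the strict diagonal on $D_+$, and the loop $\theta(v)$ sits entirely inside the fibre over $\pi(v)$, so the $B_+$-coordinate of $\tilde\Delta\circ\ev\circ\Sigma\theta$ depends only on the projection of $v$ to $B$.

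For the consequence, write $f=\ev\circ\Sigma\theta\colon\Sigma S_+\to M$ and recall that $b\cdot m=\tilde\Delta^*(b\times m)$, where $b\times m$ is the external product in $h^*(B_+\wedge M)$. Then
\[
f^*(b\cdot m)=(\tilde\Delta\circ f)^*(b\times m),
\]
and by the commutativity just proved I can replace $\tilde\Delta\circ f$ by the lower composition, which factors as $(\Id_{B_+}\wedge f)\circ(\text{switch})\circ(\Id\wedge\pi_+)\circ\Sigma\Delta_{S_+}$, where $\pi\colon S\to B$ is the bundle projection. Pulling $b\times m$ back through this factorisation converts the external product into the internal cup product of $\pi^*(b)$ with $f^*(m)$, using the standard fact that a based diagonal sends an external product to an internal one. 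After identifying $\tilde h^*(\Sigma S_+)\cong h^{*-1}(S)$ via the suspension isomorphism, this gives $f^*(b\cdot m)=\pi^*(b)\cdot f^*(m)$, which is the asserted $h^*(B)$-linearity.

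The main obstacle I expect is the bookkeeping with the switch map and suspension isomorphism, in particular verifying that the sign-twisted interchange of $\Sigma S_+$ and $B_+$ is compatible with the identification of the $h^*(B)$-action on $h^*(S)$ via $\pi^*$. The underlying geometric content is just the compatibility of $\tilde\Delta$ with the fibration structure already used in Theorem~\ref{thm:Thomspce-Mult}, so the difficulty is really formal rather than conceptual.
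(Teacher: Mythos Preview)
The paper states this lemma without proof, treating both the commutativity of the diagram and the module-homomorphism consequence as routine. Your element-chase is correct: for $t\wedge v_+$ with $v\in S_b$, both paths produce $b_+\wedge[(2t-1)v]$, and this is exactly because the loop $\theta(v)$ lies entirely in the image of the fibre $D_b$, so $\tilde\Delta$ records the constant base-point~$b$. Your deduction of $h^*(B)$-linearity from the diagram is also the standard argument and is precisely what the paper intends the reader to supply; the only care needed, as you note, is that the switch and suspension isomorphisms interact correctly so that $(\Sigma\Delta)^*$ on $\tilde h^*(\Sigma S_+\wedge S_+)$ desuspends to the cup product on $h^*(S)$, giving $f^*(b\cdot m)=\pi^*(b)\cup f^*(m)$ after identifying $\tilde h^*(\Sigma S_+)\cong h^{*-1}(S)$.
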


\section{Recollections on the Eilenberg-Moore spectral sequence}
\label{sec:EMSS}

There is of course an extensive literature on Eilenberg-Moore
spectral sequence, but for our purposes most of what we need
can be found in Smith's excellent survey article~\cite{LS:Survey},
together with Rector and Smith's papers on Steenrod
operations~\cites{DLR:EMSS-StOps,LS:KunnethThm-I}. For the
homological algebra background and construction, see~\cite{LS:HomAlg&EMSS}.
Other useful sources
are~\cites{WGD:EMSS-Stcgce,LS:LNM134,LS:EMSS-construction,WMS:Book}.

In the following we will assume that $\k$ is a field, and
$H^*(-)=H^*(-;\k)$. We will also assume that our Thom
space~$M$ from Section~\ref{sec:ThomComp} has an orientation
in $H^*(-)$, $M$ is simply connected, and $H^*(B)$ has finite
type; these conditions are needed for convergence of the
Eilenberg-Moore spectral sequence
we will use.
\begin{thm}\label{thm:EMSS}
There is a second quadrant Eilenberg-Moore spectral sequence
of\/ $\k$-Hopf algebras $(\mathrm{E}_r^{*,*},d_r)$ with
differentials
\[
d_r\:\mathrm{E}_r^{s,t}\lra \mathrm{E}_r^{s+r,t-r+1}
\]
and
\[
\mathrm{E}_2^{s,t} = \Tor^{s,t}_{H^*(M)}(\k,\k)
                               \Lra H^{s+t}(\Omega M).
\]
\end{thm}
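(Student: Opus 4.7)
The plan is to realize the asserted spectral sequence as the standard Eilenberg-Moore spectral sequence of the path-loop fibration of $M$, and then invoke the literature for convergence and the Hopf algebra structure. First I would apply one of the standard constructions of the EMSS (as in~\cites{LS:EMSS-construction,WMS:Book}) to the pullback square
\[
\xymatrix{
\Omega M \ar[r]\ar[d] & PM\ar[d] \\
\mathrm{pt}\ar[r] & M
}
\]
where $PM$ is the based path space, which is contractible. This directly yields a second-quadrant spectral sequence whose $E_2$-page is
\[
\mathrm{E}_2^{s,t} = \Tor^{s,t}_{H^*(M)}(H^*(PM),H^*(\mathrm{pt})) = \Tor^{s,t}_{H^*(M)}(\k,\k),
\]
since $PM\simeq\mathrm{pt}$.

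Next I would verify the hypotheses of the relevant convergence theorem. Theorem~\ref{thm:Thomspce-Mult}(a) combined with the finite-type assumption on $H^*(B)$ gives that $H^*(M)$ is of finite type, while by assumption $M$ is simply connected. These are exactly the conditions under which Dwyer's criterion~\cite{WGD:EMSS-Stcgce} (see also~\cite{LS:LNM134}) guarantees strong convergence of the EMSS to $H^*(\Omega M)$.

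Finally, for the Hopf algebra structure I would combine two pieces of structure present on the path-loop fibration: the loop multiplication on $\Omega M$ (making $\Omega M$ a topological monoid) and the diagonal of $M$. The machinery of Smith and Rector--Smith~\cites{LS:Survey,DLR:EMSS-StOps,LS:KunnethThm-I} enriches the bar-resolution model for the EMSS with a product and coproduct that are compatible with the differentials $d_r$ at each page, and with the induced product/coproduct on $H^*(\Omega M)$ at the $E_\infty$ level; this produces the Hopf-algebra refinement asserted in the statement.

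The main obstacle is not the construction itself but correctly threading the multiplicative and comultiplicative structures through whichever model of the EMSS one chooses, and confirming that these structures pass through Dwyer's convergence machinery unimpeded. In this generality the result is essentially a specialization of standard theorems in the literature, so I would expect the proof to be a short assembly of citations rather than to contain any genuinely new computation.
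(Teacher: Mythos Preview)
Your proposal is correct and matches the paper's approach: the paper does not give a proof of this theorem at all, treating it as a standard result and simply citing the same body of literature (Smith, Rector, Dwyer, Singer) in the paragraph preceding the statement. Your outline is, if anything, more explicit than what the paper provides; the paper merely records the standing hypotheses ($\k$ a field, $M$ simply connected, $H^*(B)$ of finite type) as being ``needed for convergence'' and moves on.
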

The grading conventions here give
\[
\Tor^{s,*}_{H^*(M)} = \Tor_{-s,*}^{H^*(M)}
\]
in the standard homological grading.

When $\k=\F_p$ for a prime $p$, this spectral sequence
admits Steenrod operations;
see~\cites{DLR:EMSS-StOps,WMS:Book,LS:EMSS-construction,LS:KunnethThm-I,LS:LNM134}.
We denote the mod~$p$ Steenrod algebra by $\mathcal{A}(p)^*$
or $\mathcal{A}^*$ when the prime~$p$ is clear.
\begin{thm}[]\label{thm:EMSS-p}
If $H^*(-)=H^*(-;\F_p)$ for a prime~$p$, the Eilenberg-Moore
spectral sequence is a spectral sequence of $\mathcal{A}^*$-Hopf
algebras.
\end{thm}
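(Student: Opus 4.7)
The plan is to reduce the claim to the standard construction of the spectral sequence and to naturality of Steenrod operations, along the lines of Rector~\cite{DLR:EMSS-StOps} and Smith~\cites{LS:KunnethThm-I,LS:EMSS-construction}. First I would recall that the EMSS of Theorem~\ref{thm:EMSS} arises from the cobar construction applied to the homotopy pullback presentation of $\Omega M$ as $*\times_M *$, equivalently from a cosimplicial model whose $r$-th level is a finite smash product of copies of $M$. Since mod~$p$ cohomology is functorial and carries a natural $\mathcal{A}^*$-action compatible with smash products through the external Cartan formula, every cosimplicial level inherits an $\mathcal{A}^*$-module structure; the coface and codegeneracy maps are built from the Thom-type diagonal $\tilde\Delta$ of diagram~\eqref{eq:ThomDiagonal} (or the based diagonal of diagram~\eqref{eq:Thomspce-Mult}), projections, and unit maps, all of which are $\mathcal{A}^*$-equivariant by naturality.

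Next I would invoke the general principle, made precise in the cited papers on Steenrod operations in the EMSS, that when the bicosimplicial input forms a complex of $\mathcal{A}^*$-modules with $\mathcal{A}^*$-linear structure maps, each term $\mathrm{E}_r^{s,t}$ inherits an action of $\mathcal{A}^*$ and each differential $d_r$ is $\mathcal{A}^*$-linear; convergence to the filtration quotients of $H^*(\Omega M;\F_p)$ respects this action. Applied to our setting this puts an $\mathcal{A}^*$-module structure on every page, compatible with that on the abutment.

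Finally I would check compatibility with the Hopf-algebra structure already supplied by Theorem~\ref{thm:EMSS}: the product and coproduct on $\mathrm{E}_r^{*,*}$ are induced from maps of cosimplicial spaces (shuffle and Alexander--Whitney type pairings combined with $\tilde\Delta$), and therefore again satisfy the Cartan formula with respect to $\mathcal{A}^*$ just as for the cohomology of a product. The genuinely non-routine part is the verification that the Steenrod operations commute with the bar/cobar differentials and are multiplicative with respect to the Cotor pairing, which is exactly the content of~\cite{DLR:EMSS-StOps} and~\cites{LS:KunnethThm-I,LS:EMSS-construction}; my intention would be to quote these theorems rather than to reconstruct their proofs.
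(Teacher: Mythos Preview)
Your proposal is correct and matches the paper's treatment: the paper does not give a proof of this theorem at all, but merely cites the same sources \cites{DLR:EMSS-StOps,WMS:Book,LS:EMSS-construction,LS:KunnethThm-I,LS:LNM134}, and then in the subsequent Proposition sketches the cosimplicial-space construction and the Cartan-formula argument you outline. If anything, your plan is more explicit than what the paper provides.
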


We will need explicit formulae for the Steenrod action. The
main result is the following.
\begin{prop}\label{prop:EMSS-Steenrodaction}
Suppose that $X$ is a based space. Then in the Eilenberg-Moore
spectral sequence
\[
\mathrm{E}_2^{*,*} = \Tor^{*,*}_{H^*(X;\F_p)}(\F_p,\F_p)
                   \Lra H^*(\Omega X;\F_p)
\]
the action of the Steenrod operations on the $\mathrm{E}_2$-term
is given in terms of the cobar construction by
\begin{align*}
\Sq^s[x_1|\cdots|x_n]
  &= \sum_{s_1+\cdots+s_n=s}[\Sq^{s_1}x_1|\cdots|\Sq^{s_n}x_n]
  &&\text{if $p=2$}, \\
\mathcal{P}^s[x_1|\cdots|x_n]
  &= \sum_{s_1+\cdots+s_n=s}[\mathcal{P}^{s_1}x_1|\cdots|\mathcal{P}^{s_n}x_n]
  &&\text{if $p$ is odd}.
\end{align*}
\end{prop}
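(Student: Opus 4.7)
The plan is to realize the Eilenberg--Moore spectral sequence geometrically (following Rector's cosimplicial construction of $\Omega X$ from the path-loop fibration), so that the $E_1$-page is identified with the algebraic cobar complex on $\bar H^*(X;\F_p)$ and the action of the Steenrod algebra on $E_1$ can then be read off from the Cartan formula applied to external smash products. First, I would recall that $\Omega X \simeq * \times_X *$ is the totalization of a cosimplicial space whose $s$-th codegree, after cohomology, produces the cobar term $(\bar H^*(X))^{\otimes s}$ whose classes we denote $[x_1|\cdots|x_s]$, and that the resulting $E_1^{-s,*}$ with $d_1$ the usual cobar differential computes $\Tor^{-s,*}_{H^*(X)}(\F_p,\F_p)$ at $E_2$.

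Second, by the work of Rector, Smith and Singer (the references \cite{DLR:EMSS-StOps,LS:KunnethThm-I,LS:EMSS-construction} cited just before the proposition), the Steenrod algebra acts on this geometric construction at the cochain level in such a way that the induced action on $E_1$ is precisely the external action on the tensor factors. Concretely, since the relevant pieces of $E_1$ arise from smash products $X^{\wedge s}$, the external Cartan formula for Steenrod operations on smash products gives, at $p=2$,
\[
\Sq^s[x_1|\cdots|x_n] = \sum_{s_1+\cdots+s_n=s}[\Sq^{s_1}x_1|\cdots|\Sq^{s_n}x_n],
\]
and analogously for $\mathcal{P}^s$ at odd primes.

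Third, I would check that this formula is compatible with the cobar differential $d_1$, so that it passes to a well-defined action on $E_2 = \Tor^{*,*}_{H^*(X)}(\F_p,\F_p)$. This is essentially a direct verification: $d_1$ is an alternating sum of insertions of the augmentation/multiplication, and the naturality of Steenrod operations together with the Cartan formula show that $d_1$ commutes with each $\Sq^s$ (respectively $\mathcal{P}^s$). Consequently the formula displayed in the statement is the induced action on $E_2$.

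The main obstacle is not the combinatorial identity, which is formal once external Cartan is in place, but rather the assertion that the geometric Steenrod action on the cosimplicial model does induce the naive external action on $E_1$ in the first place. This is the content of the Rector--Smith construction and would be quoted from \cite{DLR:EMSS-StOps} and \cite{LS:EMSS-construction}; in the interest of brevity I would appeal to those sources for that compatibility and present only the reduction above.
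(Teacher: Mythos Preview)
Your proposal is correct and follows essentially the same route as the paper's sketch: both realize the spectral sequence via the Rector/Dwyer cosimplicial model (the paper writes it for a general pullback $E'\to E\to B\leftarrow B'$ and then specializes, whereas you go directly to $\Omega X \simeq *\times_X *$), identify the $\mathrm{E}_1$ filtration quotients with cohomology of smash powers, and read off the action from the external Cartan formula. Your extra remark that the Cartan-type action commutes with the cobar differential $d_1$ is a reasonable sanity check but is already implied by naturality in the geometric construction, so the paper omits it.
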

\begin{proof}[Sketch of Proof]
There is a construction of the Eilenberg-Moore spectral
sequence for the pullback of a fibration $q$ along a
map~$f$.
\[
\xymatrix{
E'\ar[r]\ar[d]_{q'}
                                        & E\ar[d]^{q} \\
B'\ar[r]_{f} & B\ar@{}[ul]|<<<{\text{\Large$\lrcorner$}}
}
\]
For details see~\cites{WGD:EMSS-Stcgce,LS:KunnethThm-I}.
This approach involves the cosimplicial space $C^\bullet$
with
\[
C^s = E\times B^{\times s} \times B'
\]
and structure maps $h_t\:C^s\lra C^{s+1}$ ($0\leq t\leq s+1$),
\[
h_t(e,b_1,\ldots,b_s,b') =
\begin{cases}
(e,h(e),b_1,\ldots,b_s,b') & \text{if $t=0$}, \\
(e,b_1,\ldots,b_{t-1},b_t,b_t,b_{t+1},\ldots,b_s,b')
                       & \text{if $1\leq t\leq s$}, \\
(e,b_1,\ldots,b_s,q(b'),b') & \text{if $t=s+1$}.
\end{cases}
\]
The geometric realisation $|C^\bullet|$ admits a map
$E'\lra|C^\bullet|$, and on applying $H^*(-;\F_p)$ to
the coskeletal filtration of $|C^\bullet|$ we obtain
the Eilenberg-Moore spectral sequence for $H^*(E';\F_p)$.
Then the $\mathrm{E}_1$-term can be identified with bar
construction on $H^*(B;\F_p)$ and comes from the cohomology
of the filtration quotients which are suspensions of the
spaces $E\wedge B^{(s)}\wedge B'$. The action of Steenrod
operations on $\tilde H^*(E\wedge B^{(s)}\wedge B';\F_p)$
is determined using the Cartan formula, and gives the claimed
formulae in the $\mathrm{E}_2$-term.
\end{proof}

Now we come to a special situation that is our main concern.
\begin{thm}\label{thm:EMSS-u^2=0}
Suppose that the orientation $u\in H^n(M)=H^n(M;\k)$ satisfies
$u^2=0$. Then there is an isomorphism of Hopf algebras
\[
\Tor^{*,*}_{H^*(M)}(\k,\k) = \mathrm{B}^*(H^*(M)),
\]
where $\mathrm{B}^*(H^*(M))$ denotes the bar construction with
\[
\mathrm{B}^{-s}(H^*(M)) = (\tilde H^*(M))^{\otimes s}
\]
for $s\geq0$. The coproduct
\[
\psi\:\mathrm{B}^{-s}(H^*(M))\lra
\bigoplus_{i=0}^s \mathrm{B}^{-i}(H^*(M))\otimes\mathrm{B}^{i-s}(H^*(M))
\]
is the usual one with
\[
\psi([u_1|\cdots|u_s])
  = \sum_{i=0}^s\; [u_1|\cdots|u_i]\otimes[u_{i+1}|\cdots|u_s],
\]
where we use the traditional bar notation
$[w_1|\cdots|w_r]=w_1\otimes\cdots\otimes w_r$.
\end{thm}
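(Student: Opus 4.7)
The plan is to compute $\Tor^{*,*}_{H^*(M)}(\k,\k)$ directly via the (reduced) bar resolution, exploiting the hypothesis $u^2=0$ to force every internal differential to vanish.

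First I would invoke Corollary~\ref{cor:Thomspce-Mult}: since the orientation satisfies $u^2=0$, the entire cup product on $\tilde H^*(M)$ is trivial (because $\tilde H^*(M)$ is a free $H^*(B)$-module of rank~$1$ on $u$, and the product of any two elements of the form $x\cdot u,\, y\cdot u$ is, up to sign, $(xy)\cdot u^2 = 0$). Thus $\tilde H^*(M)$ is a square-zero ideal in the augmented $\k$-algebra $H^*(M)$, with augmentation ideal equal to $\tilde H^*(M)$ itself.

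Next I would write down the reduced bar complex $\bar{\mathrm{B}}(\k,H^*(M),\k)$, whose $(-s)$-th term is $(\tilde H^*(M))^{\otimes s}$ and whose differential on $[x_1|\cdots|x_s]$ is a signed sum of the internal terms $[x_1|\cdots|x_i\cdot x_{i+1}|\cdots|x_s]$ (the outer face maps involving the augmentation are killed in the reduced complex). Since every product $x_i\cdot x_{i+1}$ lies in $\tilde H^*(M)\cdot \tilde H^*(M) = 0$, the bar differential is identically zero. Hence
\[
\Tor^{-s,*}_{H^*(M)}(\k,\k) \;\cong\; \bar{\mathrm{B}}^{-s}(H^*(M)) \;=\; (\tilde H^*(M))^{\otimes s},
\]
which under the sign convention $\Tor^{s,*} = \Tor_{-s,*}$ gives the claimed identification with $\mathrm{B}^*(H^*(M))$.

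Finally I would transport the standard Hopf algebra structure of the bar construction across this identification. The bar construction of any augmented (graded commutative) $\k$-algebra carries a natural coproduct given by deconcatenation,
\[
\psi[x_1|\cdots|x_s] = \sum_{i=0}^{s}[x_1|\cdots|x_i]\otimes[x_{i+1}|\cdots|x_s],
\]
and a natural product given by the shuffle product; these make $\bar{\mathrm{B}}^*(H^*(M))$ into a Hopf algebra, and both structures are compatible with the (now trivial) bar differential. Because the Eilenberg--Moore $\mathrm{E}_2$-term is known to inherit precisely this coproduct on the cobar/bar level, the Hopf algebra isomorphism asserted follows. The only point that requires any care is bookkeeping of gradings and signs (both the $(s,t)$ bigrading of $\Tor$ and the Koszul signs in the shuffle product), but no honest computation is needed beyond the vanishing of the bar differential; this is the step where the proof essentially collapses once $u^2=0$ is in hand.
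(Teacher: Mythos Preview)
Your proposal is correct and is essentially the same argument the paper gives, only spelled out in more detail: the paper simply cites Corollary~\ref{cor:Thomspce-Mult} to conclude that $\tilde H^*(M)$ has trivial products and then defers to Smith's survey for the case of a suspension, which is exactly the reduced-bar-complex computation you wrote down. Your explicit unpacking of why the bar differential vanishes and of the deconcatenation/shuffle Hopf structure is what lies behind that citation.
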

\begin{proof}
The proof is identical to that for the case of $\Sigma X$
in~\cite{LS:Survey}*{section~2, example~4}, and uses the
fact that $\tilde H^*(N)$ has only trivial products by
Corollary~\ref{cor:Thomspce-Mult}.
\end{proof}
\begin{rem}\label{rem:EMSS-product}
The product in the $\mathrm{E}_2$-term is the shuffle product,
\[
[u_1|\cdots|u_r]\shuffle[v_1|\cdots|v_s] =
\sum_{\text{$(r,s)$ shuffles $\sigma$}} (-1)^{\Sgn(\sigma)}
             [w_{\sigma(1)}|w_{\sigma(2)}|\cdots|w_{\sigma(r+s)}],
\]
where $\sigma\in\Sigma_{r+s}$ is an \emph{$(r,s)$-shuffle} if
\[
\sigma(1)<\sigma(2)<\cdots<\sigma(r),
\quad
\sigma(r+1)<\sigma(r+2)<\cdots<\sigma(r+s),
\]
\[
w_{\sigma(i)} =
\begin{cases}
u_{\sigma(i)} & \text{if $1\leq \sigma(i)\leq r$}, \\
v_{\sigma(i)-r} & \text{if $r+1\leq \sigma(i)\leq r+s$},
\end{cases}
\]
and
\[
\Sgn(\sigma) = \sum_{(i,j)}(\deg w_i+1)(\deg w_{r+j}+1))
\]
where the summation is over pairs $(i,j)$ for which
$\sigma(i)>\sigma(r+j)$.
\end{rem}

In the situation of this Theorem we have
\begin{cor}\label{cor:EMSS-u^2=0}
The Eilenberg-Moore spectral sequence of\/ \emph{Theorem~\ref{thm:EMSS}}
collapses at the\/ $\mathrm{E}_2$-term.
\end{cor}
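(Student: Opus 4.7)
The plan is to show by induction on $r \geq 2$ that $d_r = 0$, using the Hopf algebra structure of the Eilenberg--Moore spectral sequence (Theorem~\ref{thm:EMSS}) together with the explicit identification $E_2 \cong B^*(H^*(M))$ from Theorem~\ref{thm:EMSS-u^2=0}. The first observation is that the ``primitive'' column $E_2^{-1,*} = \tilde H^*(M)$ consists of permanent cycles: for any $r \geq 2$ and $x \in E_r^{-1,*}$, the class $d_r(x)$ lies in $E_r^{r-1,*}$, which is in the first quadrant and therefore vanishes.

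For the inductive step, assume $d_2 = \cdots = d_{r-1} = 0$, so that $E_r = E_2$ as a bigraded Hopf algebra. Writing $V = \tilde H^*(M)$, Theorem~\ref{thm:EMSS-u^2=0} exhibits $E_2$ as the cofree conilpotent coassociative coalgebra $T^c(V) = \bigoplus_{n \geq 0} V^{\otimes n}$ with the deconcatenation coproduct, whose primitives are exactly $V$. Since $d_r$ is both an algebra derivation with respect to the shuffle product (Remark~\ref{rem:EMSS-product}) and a coalgebra coderivation with respect to the deconcatenation coproduct, and since any coderivation of a cofree coalgebra is determined by its composition $\pi \circ d_r \colon T^c(V) \to V$ with the projection onto cogenerators, and since the bigrading confines this composition to the column $E_r^{-(r+1),*}$, it suffices to show that the component $d_r \colon V^{\otimes (r+1)} \to V$ vanishes.

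To eliminate this last component I would combine the derivation property of $d_r$ with a passage to the linear dual. Since $H^*(B)$ has finite type, $(E_r)^\vee$ is well-defined and at $E_2$ it is isomorphic to the tensor algebra $T(V^\vee)$ with concatenation product and the cocommutative deshuffle coproduct dual to shuffle; by the (restricted) Milnor--Moore theorem, this is the (restricted) universal enveloping algebra of its free (restricted) Lie algebra of primitives on $V^\vee$. The dual differential $d_r^\vee$ is therefore a Hopf algebra derivation, hence determined by its restriction to the generators $V^\vee$. Using this, together with the already-established vanishing of $d_r$ on the primitive column $V$ and on $V^{\otimes r}$ (where the target $E_r^{0,*} = \k$ is unreachable by internal degree once $n \geq 1$), one argues by a careful bigrading count that $d_r^\vee|_{V^\vee} = 0$, hence $d_r = 0$.

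The main obstacle is precisely this final bigrading analysis. The naive derivation argument is insufficient because the shuffle algebra is not generated by $V$ alone over a field of positive characteristic, so there is no direct reduction of $d_r$ on $V^{\otimes (r+1)}$ to its values on $V$. Passing to the cocommutative dual and invoking Milnor--Moore is the natural way to finesse this, but one must carefully track the interaction of bar filtration, internal degree, and restricted Lie structure -- including $p$-th power operations in characteristic $p$ -- to verify that no nontrivial component of $d_r^\vee|_{V^\vee}$ can survive.
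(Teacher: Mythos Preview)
Your setup is correct up to the point where you reduce to showing that the component $\pi\circ d_r\colon V^{\otimes(r+1)}\to V$ of the coderivation vanishes. But your proposed endgame---dualising, invoking Milnor--Moore, and chasing bigradings---does not close the gap, and you essentially concede this yourself in the final paragraph. There is no purely formal reason, coming only from the bigraded Hopf algebra structure of $T^c(V)$, that forces this component to vanish: nothing in the bidegree count rules out a nonzero coderivation $V^{\otimes(r+1)}\to V$ of bidegree $(r,-r+1)$. The facts you cite (that $d_r$ vanishes on $V$ and on $V^{\otimes r}$) dualise to statements about $d_r^\vee$ out of $\k$ and about the $V^\vee$-component of the \emph{target} of $d_r^\vee$, and give you no handle on $d_r^\vee|_{V^\vee}$ itself.

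The paper's argument supplies exactly the missing ingredient, and it is geometric rather than formal. Lemma~\ref{lem:M->M} shows that the composite $M\to\Sigma S_+\to\Sigma\Omega M\to M$ is a homotopy equivalence, whence the cohomology suspension $\tilde H^*(M)\to H^{*-1}(\Omega M)$ is split injective (Corollary~\ref{cor:M->M}). This is then identified with the edge homomorphism $\mathrm{E}_2^{-1,*}\to H^{*-1}(\Omega M)$ (Lemma~\ref{lem:EMSS-edgehomo}, Corollary~\ref{cor:EMSS-edgehomo}). Injectivity of the edge homomorphism means precisely that no differential can hit the column $\mathrm{E}_r^{-1,*}$; equivalently, $\pi\circ d_r=0$ for every~$r$. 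Since, as you correctly observed, a coderivation of the cofree coalgebra $T^c(V)$ is determined by its projection to the cogenerators, this forces $d_r=0$. So the step you were unable to carry out algebraically is exactly where the paper injects the topology of the Thom space via Lemma~\ref{lem:M->M}.
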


The proof is similar to that of~\cite{LS:Survey}*{section~2, example~4},
and depends on two observations on this spectral sequence for $H^*(\Omega M)$
under the conditions of Theorem~\ref{thm:EMSS}.
\begin{lem}\label{lem:EMSS-edgehomo}
The edge homomorphism $e\:\mathrm{E}_2^{-1,*+1}\lra H^*(\Omega M)$ can 
be identified with the composition
\[
H^{*+1}(M)\xrightarrow{\ev^*} H^{*+1}(\Sigma\Omega M)
          \xrightarrow{\iso}H^*(\Omega M)
\]
using the canonical isomorphism $\mathrm{E}_2^{-1,*+1}\xrightarrow{\iso}H^{*+1}(M)$.
\end{lem}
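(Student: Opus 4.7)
My plan is to exploit the geometric construction of the Eilenberg--Moore spectral sequence given in the proof sketch of Proposition~\ref{prop:EMSS-Steenrodaction}, specialised to the path--loop fibration $PM\to M$ pulled back along the basepoint inclusion $*\to M$. This presents the EMSS as the spectral sequence of the coskeletal filtration on a cosimplicial space $C^\bullet$ with $C^s=PM\times M^{\times s}$, for which there is a canonical weak equivalence $\Omega M\xrightarrow{\;\simeq\;}|C^\bullet|$.

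The main step is to identify the first filtration stage. Because $PM$ is contractible, $|C^\bullet|^{(0)}$ is contractible, and the cofibre $|C^\bullet|^{(1)}/|C^\bullet|^{(0)}$ is homotopy equivalent to $\Sigma M$. Unwinding the two coface maps $h_0\:\gamma\mapsto(\gamma,\gamma(1))$ and $h_1\:\gamma\mapsto(\gamma,*)$ from the cosimplicial structure, the composite
\[
\Omega M\xrightarrow{\;\simeq\;}|C^\bullet|\lra|C^\bullet|^{(1)}/|C^\bullet|^{(0)}\simeq\Sigma M
\]
is the adjoint of the evaluation $\ev\:\Sigma\Omega M\to M$, since $\ev$ is defined from the very same path-endpoint data. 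Applying $H^*(-)$ and invoking the canonical identification $\mathrm{E}_2^{-1,*+1}\iso\tilde H^{*+1}(M)$ (which matches the algebraic bar construction with the geometric $\Sigma M$ appearing in the filtration quotient), this realises the edge homomorphism as $\ev^*$ followed by the suspension isomorphism $\tilde H^{*+1}(\Sigma\Omega M)\iso H^*(\Omega M)$, exactly as claimed.

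The main obstacle will be the careful bookkeeping between the algebraic conventions (second-quadrant cohomological indexing and the bar-construction degree shift implicit in $\mathrm{E}_2^{-1,*+1}\iso\tilde H^{*+1}(M)$) and the geometric ones (the suspension emerging from the $1$-skeleton of $|C^\bullet|$), in particular verifying on the nose that the geometric collapse map is $\Sigma\ev$ up to the expected sign. Once these are aligned the identification is forced, and no substantive topological input is needed beyond Lemma~\ref{lem:M->M} and the contractibility of $PM$.
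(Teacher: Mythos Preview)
The paper does not supply a proof of this lemma; it is stated as one of two standard facts about the Eilenberg--Moore spectral sequence underpinning Corollary~\ref{cor:EMSS-u^2=0}, with implicit appeal to the references (especially~\cite{LS:Survey}). The identification of the $s=-1$ edge with the cohomology suspension is classical.

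Your strategy of tracing the edge through the cosimplicial model from Proposition~\ref{prop:EMSS-Steenrodaction} is a legitimate route, and the essential content---that unwinding the coface maps $h_0,h_1$ at level~$1$ produces exactly the endpoint data defining $\ev$---is correct. Two imprecisions deserve flagging, however. First, calling the displayed composite ``the adjoint of $\ev$'' is not right: the adjoint of $\ev\:\Sigma\Omega M\to M$ is the identity $\Omega M\to\Omega M$, not a map $\Omega M\to\Sigma M$. Second, for an increasing filtration $|C^\bullet|^{(0)}\subset|C^\bullet|^{(1)}\subset\cdots$ there is no map from $|C^\bullet|\simeq\Omega M$ to the subquotient $|C^\bullet|^{(1)}/|C^\bullet|^{(0)}$; the edge instead arises by comparing $H^*(|C^\bullet|)$ with $H^*(|C^\bullet|^{(1)})$ via restriction and then using the cofibre sequence at that stage (or, in the dual $\mathrm{Tot}$-tower picture, from the map $\mathrm{Tot}\to\mathrm{Tot}^1$ and the identification of the first layer). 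These are precisely the ``bookkeeping'' issues you anticipate, but they already bite in the sketch as written; once the directions and degree shifts are sorted out the argument goes through and matches the folklore statement the paper is invoking.
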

\begin{cor}\label{cor:EMSS-edgehomo}
The edge homomorphism $e\:\mathrm{E}_2^{-1,*+1}\lra H^*(\Omega M)$
is a monomorphism.
\end{cor}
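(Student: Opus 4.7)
The plan is essentially to combine the two results immediately preceding the corollary. Lemma~\ref{lem:EMSS-edgehomo} identifies the edge homomorphism $e\:\mathrm{E}_2^{-1,*+1}\lra H^*(\Omega M)$ with the cohomology suspension, namely the composite
\[
H^{*+1}(M)\xrightarrow{\;\ev^*\;} H^{*+1}(\Sigma\Omega M)
          \xrightarrow{\;\iso\;} H^*(\Omega M),
\]
after applying the canonical isomorphism $\mathrm{E}_2^{-1,*+1}\iso H^{*+1}(M)$ identifying the edge term with the reduced cohomology of $M$ (since $\Tor^{-1,*}_{H^*(M)}(\k,\k)\iso\tilde{H}^*(M)$ in the bar construction). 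So it suffices to show that this cohomology suspension is injective.

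But injectivity of the cohomology suspension for $M$ was established in Corollary~\ref{cor:M->M}, which applies to any reduced cohomology theory $h^*(-)$, in particular to $H^*(-;\k)$. That result was deduced from Lemma~\ref{lem:M->M}, which exhibits $\ev\circ\Sigma\theta\circ\delta\:M\lra M$ as a homotopy equivalence and hence forces $\ev^*$ (composed with the suspension isomorphism) to have a right inverse after applying $h^*$. Chaining these observations yields the required monomorphism.

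The only step requiring the slightest care is the canonical identification $\mathrm{E}_2^{-1,*+1}\iso H^{*+1}(M)$: under Theorem~\ref{thm:EMSS-u^2=0} (or, more generally, the standard description of $\Tor^{-1}$ in terms of the bar complex) the edge term is just $\tilde H^*(M)$ placed in the appropriate bidegree, and this identification is compatible with the edge map of the spectral sequence. There is no real obstacle; the content lies entirely in the preceding lemma and corollary, so the proof reduces to citing them.
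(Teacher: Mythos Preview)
Your proof is correct and follows essentially the same route as the paper: identify $e$ with the cohomology suspension via Lemma~\ref{lem:EMSS-edgehomo}, then invoke its injectivity from Corollary~\ref{cor:M->M} (the paper cites Lemma~\ref{lem:M->M} directly, observing that $(\Sigma\theta\circ\delta)^*$ is a \emph{left} inverse for~$e$). One minor slip: you wrote ``right inverse'' where ``left inverse'' is intended, since it is $(\Sigma\theta\circ\delta)^*\circ\ev^*$ that is an isomorphism; this does not affect your conclusion.
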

\begin{proof}
This follows from Lemma~\ref{lem:M->M} since $(\Sigma\theta\circ\delta)^*$
provides a left inverse for~$e$.
\end{proof}

\section{On the cohomology of sphere bundles}\label{sec:CohSphBdles}

In this section we recall some results of Massey~\cite{Massey:CohSphBdles}*{part~II}.
We continue to use the notation and general set-up of Section~\ref{sec:ThomComp}.

We assume that our spherical fibration $\xi$ is orientable in
$H^*(-)=H^*(-;\k)$. Choosing an orientation class $u\in H^n(M)$,
we also suppose that $u^2=0$. Then~\eqref{eq:M-cofibseq} induces
an exact sequence
\[
0\ra H^*(B) \xrightarrow{\ph{\;\delta^*\;}} H^*(S)
            \xrightarrow{\;\delta^*\;} \tilde H^{*+1}(M) \ra 0
\]
in which $\delta^*$ is a an $H^*(B)$-module homomorphism with respect
to the obvious module structure on $H^*(S)$ and the Thom module
structure on $\tilde H^*(M)$. Since the left hand map is a monomorphism
we regard $H^*(B)$ as a subring of $H^*(S)$.

Now choose $v\in H^{n-1}(S)$ so that $\delta^*(v)=u$.
Then by~\cite{Massey:CohSphBdles}*{(8.1)} there is
a relation of the form
\begin{equation}\label{eq:WSM(8.1)}
v^2 = s + tv,
\end{equation}
where $s\in H^{2n-2}(B)$ and $t\in H^{n-1}(B)$. If
we make a different choice $v'\in H^{n-1}(S)$ with
$\delta^*(v')=u$, then $w=v'-v\in H^{n-1}(B)$ and
we find that
\[
(v')^2 = s' + t'v',
\]
where
\begin{align*}
s' &= s - wt - w^2, \\
t' &=
\begin{cases}
t & \text{if $n$ is even}, \\
t+2w & \text{if $n$ is odd}.
\end{cases}
\end{align*}
Massey also shows that when $n$ is odd and $\k=\F_2$,
\begin{equation}\label{eq:t=w(n-1)}
t = w_{n-1}(\xi).
\end{equation}
Here we define the Stiefel-Whitney class through the Wu
formula in $H^*(M)$,
\[
w_{n-1}(\xi)\.u = \Sq^{n-1}u.
\]
Of course this makes sense for any spherical fibration,
not just those associated with vector bundles.

Here are two examples that we will discuss again later.
\begin{examp}\label{examp:Spin(2/3)}
Consider the universal $\Spin(2)$ and $\Spin(3)$ bundles
$\zeta_2\downarrow B\Spin(2)$ and $\zeta_3\downarrow B\Spin(3)$
obtained from the canonical representations into $\SO(2)$
and $\SO(3)$. Of course the bases of these bundles can be
taken to be
\[
B\Spin(2) = \CP^\infty,
\quad
B\Spin(3) = \HP^\infty,
\]
and $\zeta_2=\eta^2$, the square of the universal complex
line bundle $\eta\downarrow\CP^\infty$. Since there are 
$\Spin(3)$-equivariant homeomorphisms
\[
\Spin(3)/\Spin(2) \iso \SO(3)/\SO(2) \iso S^2,
\]
the sphere bundle of $\zeta_3$
\[
E\Spin(3)/\Spin(2)\xrightarrow{\doteq}
E\Spin(3)\times_{\Spin(3)}\Spin(3)/\Spin(2)\xrightarrow{\ph{\doteq}}
E\Spin(3)/\Spin(3)
\]
can be realised as the natural map $\CP^\infty\lra\HP^\infty$.
In cohomology this induces a monomorphism
\[
H^*(\HP^\infty;\F_2) = \F_2[y]
                       \lra H^*(\CP^\infty;\F_2) = \F_2[x];
\quad
y\mapsto x^2.
\]
It is clear that in $H^*(-;\F_2)$, $w_2(\zeta_2)=0=w_2(\zeta_3)$
and also $w_3(\zeta_3)=0$ since $H^3(\HP^\infty)=0$.

So we can take $v=x$ and then~\eqref{eq:WSM(8.1)} becomes
\[
x^2 = y + 0x,
\]
since $t=w_2(\zeta_3)=0$. Similarly, if $p$ is an odd prime,
we have $t=0$ and the analogous relations hold in
$H^*(\CP^\infty;\F_p)$ and in $H^*(\CP^\infty;\Q)$.
\end{examp}

\section{Results on cohomology over $\F_2$}\label{sec:Coh-F2}

Now we can give some general results for the case $\k=\F_2$.
Here $H^*(-)=H^*(-;\F_2)$.

We recall Borel's theorem on the structure of Hopf algebras over
perfect fields, see~\cite{M&M}*{theorem~7.11 and proposition~7.8}.
\begin{thm}\label{thm:EMSS-u^2=0-nonilpot-2}
Suppose that the orientation $u\in H^n(M)$ satisfies $u^2=0$,
$H^*(B)$ has no nilpotents, and $\Sq^{n-1}u\neq0$. Then
$H^*(\Omega M)$ is a polynomial algebra.
\end{thm}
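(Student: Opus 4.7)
The approach is to apply Borel's structure theorem. Since $\Omega M$ is an $H$-space, $H^*(\Omega M;\F_2)$ is a connected graded commutative Hopf algebra, of finite type because the EMSS collapses (Corollary~\ref{cor:EMSS-u^2=0}) to $E_2=B^*(H^*(M))$, which has finite type under the hypotheses. Borel writes this Hopf algebra as a tensor product of monogenic factors of the form $\F_2[y]$ or $\F_2[y]/(y^{2^m})$, so ``polynomial'' is equivalent to ``no truncated factor'', equivalent in turn to the Frobenius $y\mapsto y^2$ being injective on positive-degree classes. Since $y^2=\Sq^{|y|}y$, this reduces to showing that the appropriate top Steenrod squares have trivial kernel.

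The prototype calculation is on the edge class $\bar u := e(u) \in H^{n-1}(\Omega M;\F_2)$, which is nonzero by Corollary~\ref{cor:EMSS-edgehomo}. The edge homomorphism commutes with the $\mathcal{A}^*$-action (apply Proposition~\ref{prop:EMSS-Steenrodaction} to a single-bar element), so $\bar u^2 = \Sq^{n-1}\bar u = e(\Sq^{n-1} u)$, nonzero by assumption. To iterate, invoke the Wu formula $\Sq^{n-1} u = w \cdot u$ with $w = w_{n-1}(\xi) \in H^{n-1}(B;\F_2)$, together with the Cartan formula of Theorem~\ref{thm:Thomspce-Mult}(c). Since $\Sq^j u = 0$ for $j\geq n$ (from $u^2=0$ and unstability), $\Sq^{2(n-1)}(wu)$ reduces to the single term $\Sq^{n-1}w \cdot \Sq^{n-1}u = w^2 \cdot wu = w^3 u$, and inductively the leading $E_\infty$-term of $\bar u^{2^k}$ is $e(w^{2^k-1}u)$. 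The no-nilpotents hypothesis on $H^*(B)$ now ensures $w^{2^k-1}\neq 0$ for every $k$, so $\bar u$ generates a polynomial subalgebra of $H^*(\Omega M;\F_2)$.

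To complete the proof one extends this Wu--Cartan analysis to every Hopf-algebra generator. Via the Thom isomorphism $\tilde H^*(M) \iso H^*(B)\cdot u$, the generators of $H^*(\Omega M;\F_2)$ arising at bar-length one are images $e(b\cdot u)$ for $b$ ranging over a generating set of $H^*(B)$, and an analogous Cartan computation shows that their iterated Frobenii have leading $E_\infty$-terms of the shape $e(b^{2^k} w^{2^k-1} u)$, again nonzero by the no-nilpotents hypothesis on $H^*(B)$. The main obstacle is precisely this extension step: one needs a careful identification of all Hopf-algebra generators (not only those in bar-length one, since the shuffle product over $\F_2$ introduces additional indecomposables at each bar-length $2^k$) together with a uniform verification that iterated Steenrod operations on these higher-filtration classes remain nonzero modulo higher bar filtration. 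Once this uniformity is in place, Borel's theorem excludes any truncated factor and $H^*(\Omega M;\F_2)$ is polynomial.
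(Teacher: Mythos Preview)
Your strategy---reduce to Borel's theorem by showing the Frobenius is injective, and detect squares via Steenrod operations acting on the collapsed Eilenberg--Moore spectral sequence---is exactly the paper's strategy. But you make the argument harder than it needs to be, and the ``extension step'' you flag as the main obstacle dissolves once you reorganise slightly.

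The key simplification: you do not need to identify the Hopf-algebra generators, nor to track \emph{iterated} Frobenii on each one. It suffices to show that every nonzero class in $H^*(\Omega M)$ has nonzero square; injectivity of $y\mapsto y^2$ already rules out any truncated Borel factor $\F_2[y]/(y^{2^m})$, and iterated injectivity is automatic. For this, a single computation on an arbitrary bar element does the job. By Proposition~\ref{prop:EMSS-Steenrodaction}, for $x_i\in H^{k_i}(B)$ one has in $\mathrm{E}_2=\mathrm{E}_\infty$
\[
\Sq^{\,k_1+\cdots+k_\ell+\ell(n-1)}\,[x_1\.u\,|\,\cdots\,|\,x_\ell\.u]
=[x_1^2\.\Sq^{n-1}u\,|\,\cdots\,|\,x_\ell^2\.\Sq^{n-1}u],
\]
since in each slot the Cartan sum $\sum_j \Sq^{j}x_i\.\Sq^{k_i+n-1-j}u$ collapses (using $\Sq^j x_i=0$ for $j>k_i$ and $\Sq^j u=0$ for $j\geq n$, the latter from $u^2=0$) to the single term $x_i^2\.\Sq^{n-1}u$. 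Under the hypotheses this is nonzero, so any $z\in H^*(\Omega M)$ with leading term $[x_1\.u|\cdots|x_\ell\.u]$ in filtration $-\ell$ has $z^2=\Sq^{|z|}z$ detected nonzero in $\mathrm{E}_\infty^{-\ell,*}$. This handles all bar-lengths simultaneously and in particular covers the higher-filtration indecomposables you were worried about without ever having to name them.

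Your bar-length-one Wu--Cartan analysis is correct, but it is just the case $\ell=1$ of the displayed formula; the bookkeeping of $w^{2^k-1}$ powers then becomes unnecessary, because once squaring is shown to be injective there is nothing further to check about iterates.
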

\begin{proof}
Let $0\neq x\in H^k(B)$ and consider $[x\.u]\in\mathrm{E}_2^{-1,k+n}$.
Then the Steenrod operation $\Sq^{n+k-1}$ satisfies
\begin{align*}
\Sq^{n+k-1}[x\.u] &= [\Sq^{n+k-1}(x\.u)] \\
                  &= [(\Sq^{k}x)\.\Sq^{n-1}u] \\
                  &= [x^2\.\Sq^{n-1}u] \neq0,
\end{align*}
since all other terms in the sum $\sum_i\Sq^ix\.\Sq^{n+k-1-i}u$
are easily seen to be trivial. It follows that the element of
$H^*(\Omega M)$ represented in the spectral sequence by $[x\.u]$
has non-trivial square since this is represented by
$\Sq^{n+k-1}[x\.u]=[x^2\.\Sq^{n-1}u]\neq0$.

More generally, using the description of the $\mathrm{E}_2$-term
in Theorem~\ref{thm:EMSS-u^2=0}, we can similarly see that an
element $[x_1\.u|\cdots|x_\ell\.u]$ with $x_i\in H^{k_i}(B)$ has
\[
\Sq^{k_1+\cdots+k_\ell+n\ell-\ell}[x_1\.u|\cdots|x_\ell\.u]
           = [x_1^2\.\Sq^{n-1}u|\cdots|x_\ell^2\.\Sq^{n-1}u]\neq0.
\]
Thus the algebra generators of $H^*(\Omega M)$ are not nilpotent,
so by Borel's theorem we see that $H^*(\Omega M)$ is a polynomial
algebra.
\end{proof}


\begin{thm}\label{thm:EMSS-u^2=0+w(n-1)=0}
Suppose that the orientation\/ $u\in H^n(M)=H^n(M;\F_2)$
satisfies $u^2=0$ and $\Sq^{n-1}u=0$. Then $H^*(\Omega M)$
is an exterior algebra.
\end{thm}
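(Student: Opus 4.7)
\emph{Plan.}  The approach is to mirror the proof of Theorem~\ref{thm:EMSS-u^2=0-nonilpot-2}, extracting the opposite conclusion from the opposite hypothesis $\Sq^{n-1}u=0$:  the strategy is to show that the top Steenrod square vanishes on every bar-construction element of the $\mathrm{E}_2$-term, so that by Borel's theorem the algebra generators of $H^*(\Omega M)$ are forced to square to zero.

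Take a typical element $[x_1\.u\mid\cdots\mid x_\ell\.u]\in\mathrm{E}_2^{-\ell,*}$ with $x_j\in H^{k_j}(B)$, which by Theorem~\ref{thm:EMSS-u^2=0} together with the Thom isomorphism is the generic form for $\mathrm{E}_2$, and apply the top Steenrod square $\Sq^N$ with $N=\sum_j(k_j+n-1)$ via Proposition~\ref{prop:EMSS-Steenrodaction}.  The key observation is that each factor $\Sq^s(x_j u)$ vanishes whenever $s\geq k_j+n-1$:  by Cartan, $\Sq^s(x_j u)=\sum_a\Sq^a(x_j)\.\Sq^{s-a}u$, and the unstable constraint $a\leq k_j$ forces $s-a\geq n-1$, but $\Sq^{n-1}u=0$ by hypothesis, $\Sq^n u=u^2=0$, and $\Sq^j u=0$ for $j>n$.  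Since the equation $\sum_j s_j=N$ cannot be satisfied with all $s_j\leq k_j+n-2$ (otherwise $N\leq N-\ell$), some factor vanishes in every term of the Steenrod expansion, and so $\Sq^N[x_1 u\mid\cdots\mid x_\ell u]=0$ in $\mathrm{E}_2$.

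Arguing as in Theorem~\ref{thm:EMSS-u^2=0-nonilpot-2}, this computation forces every algebra generator of $H^*(\Omega M;\F_2)$ detected in the collapsed $\mathrm{E}_\infty=\mathrm{E}_2$-term to have trivial square.  Borel's theorem (\cite{M&M}*{theorem~7.11}) decomposes the connected Hopf algebra $H^*(\Omega M;\F_2)$ over the perfect field $\F_2$ as a tensor product of monogenic pieces of the form $\F_2[y]/(y^{2^h})$ with $h\in\{1,2,\ldots,\infty\}$, and the vanishing of squares excludes every $h\geq2$, so each factor has height $h=1$ and $H^*(\Omega M)$ is an exterior algebra.

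The main obstacle is the step from ``$\Sq^N$ annihilates every bar element in $\mathrm{E}_\infty$'' to ``the square of every generator of $H^*(\Omega M)$ really vanishes in the abutment'':  in Theorem~\ref{thm:EMSS-u^2=0-nonilpot-2} one needed only the automatic implication ``non-zero in $\mathrm{E}_\infty$ forces non-zero in the abutment,'' but the converse direction needed here is a genuine extension problem.  I expect to resolve it by combining the Steenrod computation with the Hopf-algebra/Borel structure, or alternatively by iterating the filtration-preserving action of $\Sq^N$ and using finite-type boundedness of the $\mathrm{E}_\infty$-term in each total degree to conclude that the square must lie in zero filtration.
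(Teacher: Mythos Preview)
Your Steenrod calculation on bar monomials is correct and matches the paper's, and you have honestly isolated the real difficulty: from $\Sq^N[x_1u\mid\cdots\mid x_\ell u]=0$ in $\mathrm{E}_\infty$ one only concludes that $w^2$ drops into strictly lower filtration, not that $w^2=0$.  Your second proposed fix, iterating $\Sq$ and using boundedness, does not close this gap.  It yields only that the sequence of filtrations of $w,w^2,w^4,\ldots$ is strictly decreasing in the nonnegative integers, hence that every element is nilpotent; Borel's theorem then gives a tensor product of truncated pieces $\F_2[y]/(y^{2^h})$ with $h<\infty$, but nothing forces $h=1$.  So as stated the argument stops one step short of ``exterior.''

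The paper resolves the extension problem by an induction on filtration that uses the Hopf algebra structure in an essential way, via \cite{M&M}*{proposition~4.21}.  The base case is filtration~$1$: there your computation already shows $w^2$ lies in filtration~$0$, which is zero in positive degrees.  For the inductive step, take $w$ in filtration $r+1$ and write $\psi(w)=w\otimes1+1\otimes w+\sum_i w_i'\otimes w_i''$ with each $w_i',w_i''$ of filtration between $1$ and $r$; by the inductive hypothesis their squares vanish, so squaring gives $\psi(w^2)=w^2\otimes1+1\otimes w^2$.  Thus $w^2$ is simultaneously primitive and decomposable, hence by Milnor--Moore lies in the image of squaring on primitives.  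But primitives live in filtration~$1$, where squares are already zero by the base case, so $w^2=0$.  This is the missing mechanism behind your first suggested fix; the point is that Borel alone is not enough --- you need the primitive/decomposable dichotomy to feed the induction back to filtration~$1$.
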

\begin{proof}
First consider an element of $w\in H^{n+k-1}(\Omega M)$ in
filtration~$1$. We can assume that this is represented in
the Eilenberg-Moore spectral sequence by $[x\.u]$ for some
$x\in H^k(B)$. Then $w^2=\Sq^{n+k-1}w$ is represented by
\[
\Sq^{n+k-1}[x\.u] = [(\Sq^k x)\.\Sq^{n-1}u] = 0,
\]
and is also in filtration $1$. Since in positive degrees,
filtration~$0$ is trivial, we have $w^2=0$.

Now we proceed by induction on the filtration~$r$. Suppose
that for every positive degree element $z\in H^*(\Omega M)$
of filtration $r\geq1$, we have $z^2=0$. Suppose that
$w\in H^*(\Omega M)$ has filtration~$r+1$. We can assume
that~$w$ is represented by $[x_1\.u|\cdots|x_{r+1}\.u]$ where
$x_j\in H^{k_j}(B)$. Applying the Steenrod operation
$\Sq^{k_1+\cdots+k_{r+1}+(r+1)n-1}$ we see that $w^2$ is also
in filtration $r+1$ and is represented by
\[
\Sq^{k_1+\cdots+k_{r+1}+(r+1)(n-1)}[x_1\.u|\cdots|x_{r+1}\.u] =
[(\Sq^{k_1}x_1)\.\Sq^{n-1}u|
              \cdots|(\Sq^{k_{r+1}}x_{r+1})\.\Sq^{n-1}u] = 0.
\]
On the other hand, the coproduct on $w$ is
\[
\psi(w) = w\otimes1 + 1\otimes w + \sum_i w_i'\otimes w''_i
\]
where the $w'_i,w''_i$ all have filtration in the range~$1$
to $r$. On squaring and using the inductive assumption we
find that
\[
\psi(w^2) = w^2\otimes1 + 1\otimes w^2,
\]
so $w^2$ is primitive and decomposable. By~\cite{M&M}*{proposition~4.21},
the kernel of the natural homomorphism
$\mathrm{P}H^*(\Omega M)\lra\mathrm{Q}H^*(\Omega M)$ consists
of squares of primitives. Since the primitives must all have
filtration~$1$, all such squares are trivial, hence $w^2=0$.
This shows that all elements of filtration $r+1$ square to
zero, giving the inductive step.

Borel's theorem now implies that $H^*(\Omega M)$ is an exterior
algebra.
\end{proof}

\section{Results on cohomology over $\F_p$ with $p$ odd}
\label{sec:Coh-Fp}

In this we give analogous results for the case $\k=\F_p$ where~$p$
is an odd prime. Here $H^*(-)=H^*(-;\F_p)$. We assume that $n$ is
odd, say $n=2m+1$, and that $M$ has an orientation class $u\in H^{2m+1}(M)$.
For degree reasons, $u^2=0$.
\begin{thm}\label{thm:EMSS-u^2=0-nonilpot-p}
Suppose that $H^*(B)$ has no nilpotents, and $\P^m u\neq0$. Then
$H^*(\Omega M)$ is a polynomial algebra.
\end{thm}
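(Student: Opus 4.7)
The plan is to adapt the argument of Theorem~\ref{thm:EMSS-u^2=0-nonilpot-2} to the odd-primary setting, replacing $\Sq^{|a|}$ with the reduced power $\mathcal{P}^{|a|/2}$, which realises the $p$-th power on an even-degree class. First I observe that $u^2=0$ is automatic, since $|u|=2m+1$ is odd and $p$ is odd; hence Theorem~\ref{thm:EMSS-u^2=0} applies and Corollary~\ref{cor:EMSS-u^2=0} says the Eilenberg--Moore spectral sequence collapses at $\mathrm{E}_2$. Moreover, the no-nilpotents hypothesis on $H^*(B)$ combined with $p$ odd forces $H^*(B)$ to be concentrated in even degrees, since any homogeneous class $y$ of odd degree satisfies $y^2=-y^2$ and hence $y^2=0$.

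Next I would take a typical bar generator $[x_1\.u|\cdots|x_\ell\.u]\in\mathrm{E}_2^{-\ell,*}$, with $x_i\in H^{k_i}(B)$ and each $k_i$ even. Its total degree is $\sum k_i+\ell(n-1)=\sum k_i+2m\ell$, which is even, and I apply the Steenrod power $\mathcal{P}^s$ with $s=\sum k_i/2+m\ell$. By Proposition~\ref{prop:EMSS-Steenrodaction} and the Cartan formula in $H^*(M)$, this expands as a sum of terms $[\mathcal{P}^{a_1}x_1\.\mathcal{P}^{b_1}u|\cdots|\mathcal{P}^{a_\ell}x_\ell\.\mathcal{P}^{b_\ell}u]$ with $\sum(a_i+b_i)=s$. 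Using $\mathcal{P}^b u=0$ when $b>m$ (because $|u|=2m+1$) and $\mathcal{P}^a x_i=0$ when $2a>k_i$, a degree count pins down the only surviving contribution to $a_i=k_i/2$, $b_i=m$ for all $i$, yielding $[x_1^p\.\mathcal{P}^m u|\cdots|x_\ell^p\.\mathcal{P}^m u]$. This is non-zero since $x_i^p\neq0$ (no nilpotents) and $\mathcal{P}^m u\neq0$ by hypothesis.

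Since $\mathcal{P}^s$ on an even-degree class realises its $p$-th power, the above exhibits a non-zero $p$-th power for each such bar generator of $H^*(\Omega M)$, and each such generator lies in an even total degree. Borel's theorem for connected Hopf algebras of finite type over the perfect field $\F_p$ then forces $H^*(\Omega M)$ to be a polynomial algebra, exactly as at the end of the mod~$2$ proof.

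The step I expect to be the main obstacle is verifying that the non-vanishing of $\mathcal{P}^s$ at $\mathrm{E}_\infty$ genuinely detects non-vanishing of the $p$-th power in $H^*(\Omega M)$. This requires compatibility of the Steenrod action with the bar filtration and the assurance that no cancellation at higher filtration spoils the lift of the $p$-th power to $H^*(\Omega M)$ --- precisely the delicate point that makes the mod~$2$ argument work.
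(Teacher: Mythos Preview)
Your argument is correct and is essentially the paper's own proof: apply $\mathcal{P}^{m+k}$ (or more generally $\mathcal{P}^{\sum k_i/2+m\ell}$) to the bar generator, use the Cartan formula together with the instability bounds $\mathcal{P}^b u=0$ for $b>m$ and $\mathcal{P}^a x_i=0$ for $2a>k_i$ to isolate the single surviving term, and invoke Borel's theorem. Your worry in the last paragraph is not really an obstacle: the identity $w^p=\mathcal{P}^{|w|/2}w$ holds already in $H^*(\Omega M;\F_p)$, so once the spectral sequence (which is $\mathcal{A}^*$-compatible by Theorem~\ref{thm:EMSS-p}) shows that $\mathcal{P}^s w$ has non-zero image in $\mathrm{E}_\infty^{-\ell,*}$, it follows immediately that $w^p\neq0$ in cohomology with no further lifting issues.
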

Of course $\P^m u$ defines a Wu class $W_m(\xi)$ by the formula
\[
W_m(\xi)\.u = \P^m u,
\]
and the condition $\P^m u\neq0$ amounts to its non-vanishing. The
no nilpotents condition implies that $H^*(B)$ is concentrated in
even degrees.
\begin{proof}
Let $0\neq x\in H^{2k}(B)$ and consider $[x\.u]\in\mathrm{E}_2^{-1,2k+2m+1}$.
Then the Steenrod operation $\P^{m+k}$ satisfies
\begin{align*}
\P^{m+k}[x\.u] &= [\P^{m+k}(x\.u)] \\
                  &= (\P^{k}x)\.\P^{m}u \\
                  &= x^p\.\P^{m}u \neq0,
\end{align*}
since all other terms in the sum $\sum_i\P^ix\.\P^{m+k-i}u$ are
easily seen to be trivial. It follows that the element of
$H^*(\Omega M)$ represented in the spectral sequence by $[x\.u]$
has non-trivial $p$-th power since it is represented by
\[
\P^{m+k}[x\.u]=[x^p\.\P^{m}u]\neq0.
\]
Similarly every element represented by $[x_1\.u|\cdots|x_\ell\.u]$
with $x_i\in H^{2k_i}(B)$ has non-zero $p$-th power since
\[
\P^{k_1+\cdots+k_\ell+m\ell}[x_1\.u|\cdots|x_\ell\.u] \neq 0.
\]
Thus the algebra generators of $H^*(\Omega M)$ are not nilpotent,
so by Borel's theorem we see that $H^*(\Omega M)$
is a polynomial algebra.
\end{proof}

We will say that a connective commutative graded $\F_p$-algebra
is \emph{$p$-truncated} if every positive degree element $x$
satisfies $x^p=0$. When $p=2$, being $2$-truncated is equivalent
to being exterior.
\begin{thm}\label{thm:EMSS-u^2=0+W(m)=0}
Suppose that $\P^{m}u=0$. Then $H^*(\Omega M)$ is a $p$-truncated
algebra.
\end{thm}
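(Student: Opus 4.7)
My plan is to follow the same filtration-induction scheme used in the proof of Theorem~\ref{thm:EMSS-u^2=0+w(n-1)=0}, substituting $p$-th powers for squares and the Steenrod powers $\mathcal{P}^i$ for the operations $\Sq^i$. Since $p$ is odd and $H^*(\Omega M)$ is graded-commutative, every odd-degree class automatically has trivial $p$-th power, so it suffices to check truncation on elements $w$ of even total degree, where the unstable axiom gives $w^p = \mathcal{P}^{|w|/2}w$. The action of $\mathcal{P}^i$ preserves bar-length filtration on the $\mathrm{E}_r$-page by Proposition~\ref{prop:EMSS-Steenrodaction}, so the argument proceeds by induction on that filtration.

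For the base case, let $w$ lie in filtration~$1$ with $|w|=2k+2m$, represented by $[x\.u]$ with $x\in H^{2k}(B)$. The Cartan formula expands
\[
\mathcal{P}^{k+m}(x\.u) = \sum_{i+j=k+m}(\mathcal{P}^ix)\.(\mathcal{P}^ju),
\]
and a summand can be non-zero only if $i\leq k$ (unstable condition on $x$) and $j\leq m$ (unstable condition on $u$); the hypothesis $\mathcal{P}^mu=0$ excludes $j=m$, forcing $i+j<k+m$ and contradicting $i+j=k+m$. Thus $\mathcal{P}^{k+m}[x\.u]=0$ in $\mathrm{E}_2^{-1,*}$; since the operation preserves filtration and filtration~$0$ is trivial in positive degree, this forces $w^p=0$.

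For the inductive step, suppose every positive-degree class of filtration at most $r$ has zero $p$-th power, and let $w$ have filtration $r+1$ and even degree, represented by $[x_1\.u|\cdots|x_{r+1}\.u]$ with $x_j\in H^{2k_j}(B)$. The same counting argument, applied factor-by-factor inside the sum coming from Proposition~\ref{prop:EMSS-Steenrodaction}, shows that the $\mathrm{E}_2^{-(r+1),*}$-representative of $w^p$ vanishes, so $w^p$ drops into filtration at most $r$. To eliminate the residual, I use the coproduct
\[
\psi(w) = w\otimes 1 + 1\otimes w + \sum_i w'_i\otimes w''_i,
\]
with $w'_i,w''_i$ of filtration in the range~$1$ to~$r$. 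Frobenius is additive in characteristic~$p$ for graded-commutative algebras (the inner binomial coefficients vanish mod~$p$, and odd-degree cross terms contribute nothing because odd-degree elements already satisfy $y^p=0$), so the inductive hypothesis yields $\psi(w^p)=w^p\otimes1+1\otimes w^p$. Hence $w^p$ is a decomposable primitive, so by~\cite{M&M}*{proposition~4.21} it equals $z^p$ for some primitive $z$. All primitives lie in filtration~$1$, so the base case gives $z^p=0$ and hence $w^p=0$. Borel's theorem then packages the $p$-truncation conclusion.

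The step I anticipate as the main obstacle is the Frobenius-additivity identity for $\psi(w^p)$: in the graded-commutative setting one has to verify carefully that the potential odd-degree contributions in the expansion of $\psi(w)^p$ really do drop out or fall under the inductive hypothesis, so that only the diagonal terms survive mod~$p$. Once this bookkeeping is pinned down, the argument is a direct transcription of the $p=2$ proof.
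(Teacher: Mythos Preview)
Your proposal is correct and follows essentially the same route as the paper's proof: the paper also handles filtration~$1$ first via $\P^{m+k}[x\.u]=[(\P^kx)\.\P^mu]=0$, then refers back to the inductive scheme of Theorem~\ref{thm:EMSS-u^2=0+w(n-1)=0} (coproduct, decomposable primitives via \cite{M&M}*{proposition~4.21}, primitives in filtration~$1$) and concludes with Borel. Your additional remarks on odd-degree classes and on Frobenius additivity for $\psi(w)^p$ are sound elaborations of steps the paper leaves implicit.
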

\begin{proof}
First consider an element of $w\in H^{2m+2k}(\Omega M)$ in filtration~$1$.
We can assume this is represented in the Eilenberg-Moore spectral
sequence by $[x\.u]\in\mathrm{E}_2^{-1,2m+2k+1}$ for some $x\in H^{2k}(B)$.
Then $w^p=\P^{m+k}w$ is represented by
\[
\P^{m+k}[x\.u] = [(\P^k x)\.\P^{m}u] = 0,
\]
and is also in filtration~$1$. Since filtration~$0$ is trivial
in positive degrees, we have $w^p=0$.

Now as in the proof of Theorem~\ref{thm:EMSS-u^2=0+w(n-1)=0},
we prove by induction on the filtration~$r$ that for every
positive degree element $z\in H^*(\Omega M)$ of filtration
$r\geq1$ has $z^p=0$. Borel's theorem now implies that every
element of $H^*(\Omega M)$ has trivial $p$-th power.
\end{proof}

\section{Rational results}\label{sec:Rational}

In this section we take $\k=\Q$. By Borel's
Theorem~\cite{M&M}*{theorem~7.11 and proposition~7.8},
we have
\begin{thm}\label{thm:loopspceCoh-poly-0}
There is an isomorphism of algebras
\[
H^*(\Omega M;\Q) \iso
\bigotimes_i \Q[x_i] \otimes \bigotimes_j \Q[y_i]/(y_j^2),
\]
where $\deg x_i$ is even and $\deg y_i$ is odd. In particular,
if\/ $H^*(M;\Q)$ is concentrated in odd degrees then
$H^*(\Omega M;\Q)$ is a polynomial algebra on even degree
generators.
\end{thm}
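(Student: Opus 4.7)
The plan is to derive both statements from Borel's structure theorem for Hopf algebras, with a parity count on the Eilenberg-Moore $\mathrm{E}_2$-term supplying the refinement needed for the ``in particular'' clause.

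For the main isomorphism, since $M$ is simply connected, $\Omega M$ is a connected H-space, and the finite-type assumption on $H^*(B)$ together with the Thom isomorphism makes $H^*(\Omega M;\Q)$ a graded-commutative connected Hopf algebra of finite type. Over a field of characteristic zero Borel's theorem asserts that such a Hopf algebra is free as a graded-commutative algebra; the only monogenic connected Hopf algebras over $\Q$ are $\Q[x]$ with $|x|$ even and $\Q[y]/(y^2)$ with $|y|$ odd, which gives the claimed tensor decomposition.

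For the ``in particular'' clause, suppose $H^*(M;\Q)$ is concentrated in odd degrees. Then the orientation class sits in odd degree $n$, so $u^2\in H^{2n}(M;\Q)=0$ for degree reasons, and Corollary~\ref{cor:EMSS-u^2=0} applies: the Eilenberg-Moore spectral sequence collapses at
\[
\mathrm{E}_2^{-s,t}=\bigl((\tilde H^*(M;\Q))^{\otimes s}\bigr)^t \Lra H^{t-s}(\Omega M;\Q).
\]
A basis element $[u_1|\cdots|u_s]$ with $u_i\in\tilde H^{k_i}(M;\Q)$ lies in total degree $\sum_i k_i-s$; since each $k_i$ is odd, $\sum_i k_i\equiv s\pmod 2$ and the total degree is even. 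Hence $H^*(\Omega M;\Q)$ is concentrated in even degrees, and the general decomposition from the first part forces all exterior factors to be absent, leaving only polynomial generators in even degrees.

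The main obstacle, such as it is, is purely bookkeeping: one must verify the hypotheses for Borel's theorem (connectedness of $\Omega M$ from the simple connectivity of $M$, finite type via the Thom isomorphism and the finite-type assumption on $H^*(B)$) and carefully apply the bidegree conventions of the Eilenberg-Moore spectral sequence to extract the parity count. No further delicate input is required.
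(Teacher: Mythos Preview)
Your proposal is correct and follows the same line as the paper, which simply invokes Borel's theorem~\cite{M&M}*{theorem~7.11 and proposition~7.8} without further comment. You have supplied the detail the paper omits: the parity count on the collapsed Eilenberg-Moore $\mathrm{E}_2$-term (via Theorem~\ref{thm:EMSS-u^2=0} and Corollary~\ref{cor:EMSS-u^2=0}) showing that when $\tilde H^*(M;\Q)$ is concentrated in odd degrees the associated graded, and hence $H^*(\Omega M;\Q)$ itself, lives entirely in even degrees, so no exterior factors can occur.
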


\section{Local to global results}\label{sec:Local-Global}

Before giving some examples, we record a variant of the local-global
result~\cite{AB&BR:qsymm}*{proposition~2.4}. We follow the convention
that a prime~$p$ can be $0$ or positive, and set $\F_0=\Q$.

Let $S\subseteq\N$ be the multiplicatively closed set generated
by a set of non-zero primes (if this set is empty then $S=\{1\}$).
Then
\[
\Z[S^{-1}] = \{a/b: a\in\Z,\; b\in S\}.
\]
In the following, whenever $p\notin S$, $\F_p=\Z[S^{-1}]/(p)$.
\begin{prop}\label{prop:PolyAlgs}
Let $H^*$ be a graded commutative connective $\Z[S^{-1}]$-algebra
which is concentrated in even degrees and with each $H^{2n}$ a
finitely generated free $\Z[S^{-1}]$-module. Suppose that for each
prime~$p\notin S$, $H(p)^*=H^*\otimes\F_p$ is a polynomial algebra,
then $H^*$ is a polynomial algebra and for every prime~$p$,
\[
\rank_{\Z[S^{-1}]}\mathrm{Q}H^{2n} = \dim_{\F_p} \mathrm{Q}H(p)^{2n}.
\]
\end{prop}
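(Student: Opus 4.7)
The plan is to construct polynomial generators of $H^*$ by lifting a basis of the indecomposables module $\mathrm{Q}H^{2n}=H^{2n}/(H^+\cdot H^+)^{2n}$, and then to check by reduction to each residue field that the induced map from a polynomial algebra is an isomorphism. Throughout I use that $\Z[S^{-1}]$ is a PID whose residue fields at the nonzero prime ideals are exactly the $\F_p$ with $p\notin S$, together with its fraction field $\Q$ corresponding to the $p=0$ case.

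First I would compare Poincar\'e series. Since $H^{2n}$ is free of rank $r_n$, every $H(p)^*$ has Poincar\'e series $P(t)=\sum_n r_n t^{2n}$. A polynomial algebra with $d_n$ generators in degree $2n$ has Poincar\'e series $\prod_n(1-t^{2n})^{-d_n}$, and this expression determines the multiset of generator degrees uniquely; hence $d_n:=\dim_{\F_p}\mathrm{Q}H(p)^{2n}$ is independent of $p$ and matches $\dim_\Q\mathrm{Q}(H^*\otimes\Q)^{2n}$.

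Second, I would show $\mathrm{Q}H^{2n}$ is a free $\Z[S^{-1}]$-module of rank $d_n$. Right-exactness of the indecomposables functor gives $\mathrm{Q}H^{2n}\otimes\F_p\iso\mathrm{Q}H(p)^{2n}$, of dimension $d_n$ for every $p\notin S$, and similarly over $\Q$. Decomposing $\mathrm{Q}H^{2n}$ according to the structure theorem over the PID $\Z[S^{-1}]$, the $\Q$-computation fixes the free rank at $d_n$ while the matching $\F_p$-dimensions force all torsion to vanish. Splitting the surjection $H^{2n}\twoheadrightarrow\mathrm{Q}H^{2n}$, I then lift a basis to obtain elements $x_{n,\alpha}\in H^{2n}$ and form the polynomial algebra $P=\Z[S^{-1}][x_{n,\alpha}]$ together with the tautological algebra map $\phi\colon P\to H^*$.

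Finally I would verify $\phi$ is an isomorphism. The Poincar\'e series of $P$ equals $P(t)$, so each $P^{2n}$ is free of the same rank $r_n$ as $H^{2n}$. For every $p\notin S$ and for $\Q$, the reduction $\phi\otimes\F_p$ carries the $\bar x_{n,\alpha}$ to a basis of $\mathrm{Q}H(p)^{2n}$, hence to a polynomial generating set of $H(p)^*$; since both sides are polynomial algebras with equal Poincar\'e series, $\phi\otimes\F_p$ is a degreewise-surjective map of equidimensional vector spaces, so an isomorphism. A homomorphism of finitely generated free $\Z[S^{-1}]$-modules of equal rank that becomes an isomorphism modulo every nonzero prime and modulo $\Q$ is itself an isomorphism (the cokernel, being finitely generated with vanishing residue-field reductions, is zero, after which rank comparison kills the kernel). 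The rank identity then drops out of the first two steps. The main obstacle is the freeness assertion of Step 2: one must control both the free rank and the torsion of $\mathrm{Q}H^{2n}$ from residue-field data, and this is precisely where the rational case in the hypothesis is essential, since without it the free rank cannot be pinned down whenever $S$ omits only finitely many primes.
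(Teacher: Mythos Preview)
Your proposal is correct. The paper does not give a self-contained argument here: it simply says that the proof of \cite{AB&BR:qsymm}*{proposition~2.4} goes through after replacing $\Z$ by the PID $\Z[S^{-1}]$ and restricting to primes $p\notin S$ (including $p=0$). Your three-step argument---matching Poincar\'e series to see the generator counts $d_n$ are prime-independent, using the structure theorem over the PID together with the rational case to prove $\mathrm{Q}H^{2n}$ is free of rank $d_n$, then lifting a basis and checking the resulting map $\phi\colon P\to H^*$ is an isomorphism residue-field by residue-field---is exactly the standard route, and is presumably what the cited proof does over $\Z$. Your closing remark that the rational hypothesis is genuinely needed to pin down the free rank of $\mathrm{Q}H^{2n}$ when only finitely many primes lie outside $S$ is a nice sharpening; when infinitely many primes are available the torsion part of $\mathrm{Q}H^{2n}$ can only interfere at finitely many of them, so the free rank is already forced.
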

\begin{proof}
The proof of~\cite{AB&BR:qsymm}*{proposition~2.4} can be modified
by systematically replacing $\Z$ with the principal ideal domain
$\Z[S^{-1}]$ and working only with primes not contained in $S$
(including~$0$).
\end{proof}

\section{Some examples}\label{sec:Examples}

Our first example is a recasting of the main result of~\cite{AB&BR:qsymm}.
\begin{examp}\label{examp:CP}
Consider the universal line bundle $\eta\downarrow\CP^\infty$, viewed
as a real $2$-plane bundle. Then the $3$-dimensional bundle $\xi=\eta\oplus\R$
has Thom space $M\xi = \Sigma M\mathrm{U}(1)\sim\CP^\infty$. It is
straightforward to verify that the conditions of
Theorems~\ref{thm:EMSS-u^2=0-nonilpot-2} and~\ref{thm:EMSS-u^2=0-nonilpot-p}
apply. Thus $H^*(\Omega\Sigma\CP^\infty;\Z)$ is polynomial.
\end{examp}
\begin{examp}\label{examp:Spin(2/3)-cont}
Recall Example~\ref{examp:Spin(2/3)}.

Here $w_2(\zeta_3)=0=w_2(\zeta_2)$, so $H^*(\Omega M\Spin(3);\F_2)$ and
$H^*(\Omega\Sigma M\Spin(2);\F_2)$ are exterior algebras.

For an odd prime $p$, the natural map $\Sigma M\Spin(2)\lra M\Spin(3)$
induces a monomorphism in $H^*(-;\F_p)$ and in
$H^*(M\Spin(2);\F_p)=H^*(\CP^\infty;\F_p)$ we see that for the generator
$x\in H^2(\CP^\infty;\F_p)$. $\P^1x = x^p\neq0$. Therefore
$H^*(\Omega M\Spin(3);\F_p)$ and $H^*(\Omega\Sigma M\Spin(2);\F_p)$ are
polynomial algebras.

Combining these results we see that $H^*(\Omega M\Spin(3);\Z[1/2])$ and
$H^*(\Omega\Sigma M\Spin(2);\Z[1/2])$ are polynomial algebras.
\end{examp}

\section{Homology generators and a stable splitting}\label{sec:Homgens}

The map $\theta\:S_+\lra\Omega M$ introduced in Section~\ref{sec:ThomComp}
allows us to define a \emph{canonical} choice of generator $v\in H^{n-1}(S)$
in the sense of Massey's paper~\cite{Massey:CohSphBdles}, namely
\[
v = (\ev\circ\Sigma\theta)^*u.
\]
This follows from Lemma~\ref{lem:M->M}. When~$n=2m+1$ is odd,
in mod~$p$ cohomology $H^*(-)=H^*(-;\F_p)$, from~\eqref{eq:WSM(8.1)}
we obtain
\[
v^2 = s + tv,
\]
where
\[
t =
\begin{cases}
w_{2m}(\xi) & \text{if $p=2$}, \\
W_{m}(\xi) & \text{if $p$ is odd}.
\end{cases}
\]
and we define these invariants by
\begin{align*}
w_{2m}(\xi)\.u &= \Sq^{2m} u, \\
W_m(\xi)\.u    &= \P^m u.
\end{align*}
Notice that the multiplicativity given by Lemma~\ref{lem:Diag-S/M}
implies that for $x\in H^*(B)$,
\[
(\ev\circ\Sigma\theta)^*(x\.u) = xv.
\]

Now let $b_i\in H^*(B)$ form an $\F_p$-basis for $H^*(B)$,
where we suppose that $b_0=1$. Then the elements
$b_iv,b_i\in H^*(S)$ form a basis for $H^*(S)$, and the
$b_i\.u$ form a basis for $\tilde H^*(M)$. Since
\[
\delta^*(b_iv) = b_i\.u, \quad \delta^*(b_i) = 0,
\]
for the dual bases $(b_i\.v)^\circ,(b_i)^\circ$ of $H^*(S)$
and $(b_i\.u)^\circ$ of $\tilde H^*(M)$ we have
\[
\delta_*((b_i\.u)^\circ) = (b_iv)^\circ.
\]
Furthermore, $(\Sigma\theta\circ\delta)_*((b_i\.u)^\circ)$ is
dual to the class represented in the Eilenberg-Moore spectral
sequence by the primitive $[b_i\.u]$, hence the
$(\Sigma\theta\circ\delta)_*((b_i\.u)^\circ)$ form a basis
for the indecomposables $\mathrm{Q}H_*(\Omega M)$. Using
the bar resolution description of the Eilenberg-Moore
spectral sequence and the dual cobar resolution for the
homology spectral sequence
\[
\mathrm{E}^2_{*,*} = \Cotor^{H_*(M)}_{*,*}(\F_p,\F_p)
                                       \Lra H_*(\Omega M)
\]
we obtain
\begin{prop}\label{prop:H*-generators}
The homology algebra $H_*(\Omega M;\F_p)$ is the free
non-commutative algebra on the elements
$(\Sigma\theta\circ\delta)_*((b_i\.u)^\circ)$.
\end{prop}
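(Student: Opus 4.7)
The plan is to dualise the collapse and bar-construction description of the cohomological Eilenberg-Moore spectral sequence from Section~\ref{sec:EMSS} and then read off the algebra structure on $H_*(\Omega M;\F_p)$ from the associated graded.

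First I would identify the $\mathrm{E}^2$-term of the homology Eilenberg-Moore spectral sequence. Since $H^*(M)$ has trivial reduced cup product (Corollary~\ref{cor:Thomspce-Mult}), its $\F_p$-linear dual coalgebra $H_*(M)$ has trivial reduced coproduct, so the cobar differential on the cobar construction of $\tilde H_*(M)$ vanishes identically. Dualising Theorem~\ref{thm:EMSS-u^2=0} and Remark~\ref{rem:EMSS-product} then gives an isomorphism of bigraded Hopf algebras
\[
\mathrm{E}^2_{*,*} = \Cotor^{H_*(M)}_{*,*}(\F_p,\F_p) \iso T(\tilde H_*(M))
\]
with concatenation product dual to the shuffle product. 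Dualising Corollary~\ref{cor:EMSS-u^2=0} shows this spectral sequence collapses, so $T(\tilde H_*(M)) = \mathrm{E}^\infty = \mathrm{gr}\, H_*(\Omega M)$ as an algebra; in particular the filtration-$1$ column $\mathrm{E}^\infty_{-1,*}$, with basis $(b_i\.u)^\circ$, generates the tensor algebra.

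Next I would identify canonical geometric lifts of these generators. The homology edge homomorphism is $\F_p$-dual to the one in Lemma~\ref{lem:EMSS-edgehomo}, and Lemma~\ref{lem:M->M} produces $(\Sigma\theta\circ\delta)_*$ as a geometric splitting of the cohomology suspension. Hence each class $(\Sigma\theta\circ\delta)_*((b_i\.u)^\circ)\in H_*(\Omega M)$ lies in filtration $\geq 1$ and projects to the corresponding generator of $\mathrm{E}^\infty_{-1,*}$, as noted in the paragraph preceding the proposition. The universal property of the tensor algebra then yields a graded algebra map
\[
T(\tilde H_*(M)) \lra H_*(\Omega M);
\quad
(b_i\.u)^\circ \mapsto (\Sigma\theta\circ\delta)_*((b_i\.u)^\circ),
\]
whose associated graded with respect to the Eilenberg-Moore filtration is the identity on $\mathrm{E}^\infty$. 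Using the finite-type hypothesis on $H^*(B)$, which passes to $\tilde H_*(M)$ and makes the filtration on $H_*(\Omega M)$ finite in each degree, a standard filtered comparison argument forces this map to be an isomorphism.

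The hardest step will be the second one: rigorously identifying the homological edge homomorphism with the $\F_p$-dual of $\ev^*$ inside the cosimplicial model used in the sketch of Proposition~\ref{prop:EMSS-Steenrodaction}, and thereby verifying that $(\Sigma\theta\circ\delta)_*((b_i\.u)^\circ)$ has exactly the asserted associated graded image. Once this compatibility is in place, everything else reduces to formal filtered algebra together with Borel-type counting arguments.
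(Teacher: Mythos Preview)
Your proposal is correct and follows essentially the same route as the paper: dualise the bar-construction description of the cohomological Eilenberg--Moore spectral sequence to the homology spectral sequence with $\mathrm{E}^2_{*,*}=\Cotor^{H_*(M)}_{*,*}(\F_p,\F_p)$, use the collapse and the identification of the filtration-$1$ generators via the edge map (dual to Lemma~\ref{lem:EMSS-edgehomo}), and then pass from the associated graded tensor algebra to $H_*(\Omega M)$. The paper's own treatment is terser---it simply invokes the cobar resolution and the preceding paragraph identifying $(\Sigma\theta\circ\delta)_*((b_i\.u)^\circ)$ as dual to the primitives $[b_i\.u]$---but the underlying argument is the one you have written out.
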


Now we can give an analogue of the James splitting. We need
the free $S$-algebra functor $\mathbb{T}$
of~\cite{EKMM}*{section~II.4}. This is defined for an $S$-module
$X$ by
\[
\mathbb{T}X = \bigvee_{k\geq0} X^{(k)},
\]
where $(-)^{(k)}$ denotes the $k$-th smash power. The map
$\Sigma\theta\circ\delta$ gives rise to a map of spectra
\[
\Theta\:\Sigma^{-1}\Sigma^\infty M \lra \Sigma^\infty\Omega M
\]
and by the freeness property of $\mathbb{T}$, there is an
induced morphism of $S$-algebras
\[
\tilde{\Theta}\:\mathbb{T}(\Sigma^{-1}\Sigma^\infty M)
                        \lra \Sigma^\infty (\Omega M)_+,
\]
where $\Sigma^\infty(\Omega M)_+$ becomes an $S$-algebra
using the natural $A_\infty$ structure on $\Omega M$.
\begin{thm}\label{thm:StableSplitting}
Suppose that $p$ is a prime for which
\emph{Proposition~\ref{prop:H*-generators}} is true.
Then $\tilde{\Theta}$ is an $H\F_p$-equivalence of
$S$-algebras.
\end{thm}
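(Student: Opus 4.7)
The plan is to show that $\tilde{\Theta}$ is an $H\F_p$-equivalence by checking that the induced map on $\F_p$-homology is an isomorphism. Since $\tilde{\Theta}$ is a morphism of $S$-algebras, $\tilde{\Theta}_*$ on $H_*(-;\F_p)$ will be a homomorphism of graded $\F_p$-algebras, and it suffices to identify both sides as free tensor algebras and verify that $\tilde{\Theta}_*$ sends a chosen basis of generators of the source bijectively to a set of free algebra generators of the target.

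First I would compute the source. Since $\mathbb{T}X = \bigvee_{k\geq 0} X^{(k)}$, and since smash products of spectra convert to tensor products in $H_*(-;\F_p)$ by the K\"unneth theorem over a field, we have
\[
H_*(\mathbb{T}(\Sigma^{-1}\Sigma^\infty M);\F_p) \iso T_{\F_p}\bigl(\tilde H_{*+1}(M;\F_p)\bigr),
\]
the free (tensor) $\F_p$-algebra on the reduced homology of $M$ shifted down by one. A natural basis for the module of generators is given by the duals $(b_i\.u)^\circ$ of the basis $\{b_i\.u\}$ of $\tilde H^*(M;\F_p)$ produced by the Thom isomorphism from a basis $\{b_i\}$ of $H^*(B;\F_p)$.

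Next I would compare with the target. The $S$-algebra structure on $\Sigma^\infty(\Omega M)_+$ is the one arising from the $A_\infty$ (Pontryagin) product on $\Omega M$, and it is standard that the induced product on $H_*(\Omega M;\F_p)$ is the Pontryagin product. Under the hypothesis of the theorem, \emph{Proposition~\ref{prop:H*-generators}} asserts that $H_*(\Omega M;\F_p)$ is the free non-commutative $\F_p$-algebra on the elements $(\Sigma\theta\circ\delta)_*((b_i\.u)^\circ)$. By the defining universal property of $\mathbb{T}$, the $S$-algebra map $\tilde{\Theta}$ restricts on $\Sigma^{-1}\Sigma^\infty M$ to $\Theta$, which is precisely the spectrum-level adjoint of $\Sigma\theta\circ\delta$, so on $H_*(-;\F_p)$ the generators $(b_i\.u)^\circ$ of the source map exactly to the free generators of the target provided by \emph{Proposition~\ref{prop:H*-generators}}.

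Hence $\tilde{\Theta}_*$ is a homomorphism between two free tensor $\F_p$-algebras which takes a prescribed basis of free generators bijectively to a set of free generators of the target, and is therefore an isomorphism of graded $\F_p$-algebras. This gives the asserted $H\F_p$-equivalence. The main bookkeeping obstacle is verifying, at the point-set level in a suitable model of spectra (e.g.\ the setting of \cite{EKMM}), that the universal $S$-algebra extension $\tilde{\Theta}$ really does induce the Pontryagin-product $\F_p$-algebra structure on $H_*(\Omega M;\F_p)$, so that the identification of generators coming from \emph{Proposition~\ref{prop:H*-generators}} genuinely matches the image of the source generators under $\tilde{\Theta}_*$.
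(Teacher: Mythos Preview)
Your proposal is correct and follows essentially the same approach as the paper's own (very terse) proof: identify $H_*(\mathbb{T}(\Sigma^{-1}\Sigma^\infty M);\F_p)$ as the tensor algebra via K\"unneth, observe that $\tilde{\Theta}_*$ carries the external products of generators to the corresponding Pontryagin products in $H_*(\Omega M;\F_p)$, and then invoke Proposition~\ref{prop:H*-generators} to conclude that $\tilde{\Theta}_*$ is an isomorphism. Your write-up is simply a more detailed unpacking of what the paper compresses into two sentences.
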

\begin{proof}
Under the map $\tilde{\Theta}_*$, an exterior product
of classes in $H_*(\Sigma^{-k}\Sigma^\infty M^{(k)};\F_p)$
goes to their internal product in $H_*(\Omega M;\F_p)$.
Now Proposition~\ref{prop:H*-generators} shows that
$\tilde{\Theta}$ is an $\F_p$-equivalence for such
a prime~$p$.
\end{proof}
Combining our results and using an arithmetic square
argument we obtain
\begin{thm}\label{thm:LocalSplitting}
Let $S\subseteq\N$ be the multiplicatively closed set
generated by all the primes $p$ for which
\emph{Proposition~\ref{prop:H*-generators}} is false.
Then $\tilde{\Theta}$ is an $H\Z[S^{-1}]$-equivalence
of $S$-algebras. Hence there is an $H\Z[S^{-1}]$-equivalence
\[
\bigvee_{k\geq1} \Sigma^{-k}\Sigma^\infty M^{(k)}
         \lra \Sigma^\infty\Omega M.
\]
\end{thm}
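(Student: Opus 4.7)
The plan is to bootstrap Theorem~\ref{thm:StableSplitting} prime-by-prime and then assemble the result via a standard arithmetic fracture argument. By Theorem~\ref{thm:StableSplitting}, for every prime $p$ for which Proposition~\ref{prop:H*-generators} holds --- equivalently for every $p\notin S$, where we include $p=0$ with $\F_0=\Q$ per the convention of Section~\ref{sec:Local-Global} --- the map $\tilde{\Theta}$ is an $H\F_p$-equivalence. In particular it is an $H\Q$-equivalence.

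To promote these individual $H\F_p$-equivalences to a single $H\Z[S^{-1}]$-equivalence I would apply the following standard criterion: for a map $f\:X\to Y$ of bounded-below spectra whose $\Z[S^{-1}]$-homologies are finitely generated in each degree, $f$ is an $H\Z[S^{-1}]$-equivalence if and only if it is an $H\F_p$-equivalence for every prime $p\notin S$ (including $p=0$). This reduces to the observation that a homomorphism of finitely generated modules over the PID $\Z[S^{-1}]$ is an isomorphism iff it becomes so after tensoring with every residue field, combined with the universal coefficient exact sequence. The finite-type hypotheses are easy to verify here: $\tilde H^*(M)$ has finite type by the Thom isomorphism and our standing assumption on $H^*(B)$, so each $\Sigma^{-k}\Sigma^\infty M^{(k)}$ has finite type, and their connectivities grow linearly with $k$ because $M$ is simply connected; similarly $\Sigma^\infty(\Omega M)_+$ has finite type since $\Omega M$ does.

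Once $\tilde{\Theta}$ is known to be an $H\Z[S^{-1}]$-equivalence, the wedge statement is essentially formal. Unpacking the definition of $\mathbb{T}$,
\[
\mathbb{T}(\Sigma^{-1}\Sigma^\infty M)
   = S \,\vee\, \bigvee_{k\geq1}\Sigma^{-k}\Sigma^\infty M^{(k)},
\]
with the $k=0$ summand being the unit $S$, while $\Sigma^\infty(\Omega M)_+\simeq S\vee\Sigma^\infty\Omega M$ under the canonical basepoint splitting. The morphism $\tilde{\Theta}$ sends unit to unit, so restricting to the $k\geq1$ summands and projecting onto the reduced factor $\Sigma^\infty\Omega M$ yields the claimed map, which inherits the $H\Z[S^{-1}]$-equivalence property.

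The main obstacle is the fracture step itself: justifying that an $H\Z[S^{-1}]$-equivalence can be detected prime-by-prime. In principle this is a routine application of Bousfield localization or a direct universal-coefficient computation, but it genuinely requires the finite-type hypothesis on both source and target; without that, assembling mod-$p$ information into integral information would be delicate. Once the fracture principle is in hand, the rest of the argument --- disassembling $\mathbb{T}$ into its wedge summands and stripping off the unit --- is purely formal.
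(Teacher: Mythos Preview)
Your proposal is correct and follows essentially the same line as the paper, which merely asserts that the result follows by ``combining our results and using an arithmetic square argument''. You have supplied the details the paper omits: the prime-by-prime input from Theorem~\ref{thm:StableSplitting}, the finite-type verification needed to make the fracture/universal-coefficient step go through over the PID $\Z[S^{-1}]$, and the formal unpacking of $\mathbb{T}$ to obtain the wedge statement.
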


Of course, this stable splitting is very different from
the James splitting for a connected based space~$X$,
\[
\Sigma \Omega\Sigma X \sim \bigvee_{k\geq1} \Sigma X^{(k)}.
\]

\begin{bibdiv}
\begin{biblist}

\bib{AB&BR:qsymm}{article}{
   author={Baker, A.},
   author={Richter, B.},
   title={Quasisymmetric functions from a topological point of view},
   journal={Math. Scand.},
   volume={103},
   date={2008},
   number={2},
   pages={208--242},
}

\bib{RB-HS:Pontryagin}{article}{
   author={Bott, R.},
   author={Samelson, H.},
   title={On the Pontryagin product in spaces of paths},
   journal={Comment. Math. Helv.},
   volume={27},
   date={1953},
   pages={320--337},
}

\bib{WGD:EMSS-Stcgce}{article}{
   author={Dwyer, W. G.},
   title={Strong convergence of the Eilenberg-Moore
   spectral sequence},
   journal={Topology},
   volume={13},
   date={1974},
   pages={255--265},
}
		
\bib{EKMM}{book}{
    author={Elmendorf, A. D.},
    author={Kriz, I.},
    author={Mandell, M. A.},
    author={May, J. P.},
     title={Rings, modules, and algebras in stable homotopy theory},
    series={Mathematical Surveys and Monographs},
    volume={47},
      note={with an appendix by M. Cole},
 publisher={American Mathematical Society},
      date={1997},
}
		
\bib{Massey:CohSphBdles}{article}{
   author={Massey, W. S.},
   title={On the cohomology ring of a sphere bundle},
   journal={Indiana Univ. Math. J. (formerly J. Math. Mech.)},
   volume={7},
   date={1958},
   pages={265--289},
}
	
\bib{M&M}{article}{
   author={Milnor, J. W.},
   author={Moore, J. C.},
   title={On the structure of Hopf algebras},
   journal={Ann. of Math. (2)},
   volume={81},
   date={1965},
   pages={211--264},
}

\bib{TP:CohLoopSpces}{article}{
   author={Petrie, T.},
   title={The cohomology of the loop spaces of Thom spaces},
   journal={Amer. J. Math.},
   volume={89},
   date={1967},
   pages={942--955},
}

\bib{NR:LoopsCones}{article}{
   author={Ray, N.},
   title={The loop group of a mapping cone},
   journal={Quart. J. Math. Oxford Ser. (2)},
   volume={24},
   date={1973},
   pages={485--498},
}

\bib{DLR:EMSS-StOps}{article}{
   author={Rector, D. L.},
   title={Steenrod operations in the Eilenberg-Moore
   spectral sequence},
   journal={Comment. Math. Helv.},
   volume={45},
   date={1970},
   pages={540--552},
}

\bib{WMS:Book}{book}{
   author={Singer, W. M.},
   title={Steenrod squares in spectral sequences},
   series={Mathematical Surveys and Monographs},
   volume={129},
   publisher={American Mathematical Society},
   date={2006},
}

\bib{LS:HomAlg&EMSS}{article}{
   author={Smith, L.},
   title={Homological algebra and the Eilenberg-Moore
   spectral sequence},
   journal={Trans. Amer. Math. Soc.},
   volume={129},
   date={1967},
   pages={58--93},
}

\bib{LS:EMSS-construction}{article}{
   author={Smith, L.},
   title={On the construction of the Eilenberg-Moore
   spectral sequence},
   journal={Bull. Amer. Math. Soc.},
   volume={75},
   date={1969},
   pages={873--878},
}

\bib{LS:LNM134}{book}{
   author={Smith, L.},
   title={Lectures on the Eilenberg-Moore spectral
   sequence},
   series={Lect. Notes in Math.},
   publisher={Springer-Verlag},
   volume={134},
   date={1970},
}

\bib{LS:KunnethThm-I}{article}{
   author={Smith, L.},
   title={On the K\"unneth theorem. I. The Eilenberg-Moore
   spectral sequence},
   journal={Math. Z.},
   volume={116},
   date={1970},
   pages={94--140},
}

\bib{LS:Survey}{article}{
   author={Smith, L.},
   title={On the Eilenberg-Moore spectral sequence},
   conference={
      title={Algebraic topology (Proc. Sympos. Pure Math., Vol. XXII, Univ.
      Wisconsin, Madison, Wis., 1970)},},
   book={
      publisher={Amer. Math. Soc.},
      place={Providence, R.I.},
         },
   date={1971},
   pages={231--246},
}
		
\end{biblist}
\end{bibdiv}

\end{document}